\numberwithin{equation}{section}
\numberwithin{figure}{section}
\theoremstyle{plain}
\newtheorem{thm}{\protect\theoremname}[section]
\theoremstyle{plain}
\newtheorem{cor}[thm]{\protect\corollaryname}
\newenvironment{proof}[1][\protect\proofname]{\par
	\normalfont\topsep6\p@\@plus6\p@\relax
	\trivlist
	\itemindent\parindent
	\item[\hskip\labelsep\scshape #1]\ignorespaces
}{%
	\endtrivlist\@endpefalse
}
\providecommand{\proofname}{Proof}
\theoremstyle{remark}
\theoremstyle{plain}
\newtheorem{lem}[thm]{\protect\lemmaname}
\providecommand{\corollaryname}{Corollary}
\providecommand{\lemmaname}{Lemma}
\providecommand{\remarkname}{Remark}
\providecommand{\theoremname}{Theorem}
\numberwithin{equation}{section}
\numberwithin{figure}{section}
\theoremstyle{plain}
\theoremstyle{plain}
\newtheorem{remark}[thm]{Remark}
\def\1{{\textcolor{red} {1}}}
\def\d1{{\textcolor{red} {d-1}}}
\def \bN {\mathbb N}
\def \bR {\mathbb R}
\def \bZ {\mathbb Z}
\def \rd {\mathrm d}
\def \by {\mathbf y}
\def \bz {\mathbf z}
\def \fN {\mathfrak N}
\def \cA {\mathcal A}
\def \cB {\mathcal B}
\def \cC {\mathcal C}
\def \cE {\mathcal E}
\def \cG {\mathcal G}
\def \cK {\mathcal K}
\def \cM {\mathcal M}
\def \cW {\mathcal W}
\def \le {\leqslant}
\def \leq {\leqslant}
\def \ge {\geqslant}
\def \geq {\geqslant}
\def \dim {\mathrm{dim}}
\def \dimh {{\mathrm{\dim_H}}}
\def \dimf {{\mathrm{\dim_F}}}
\def \Bad {{\mathrm{Bad}}}
\def \d {{\mathrm{d}}}
\def \ds1 {\mathds{1}}
\def \alp {{\alpha}}
\def \bet {{\beta}}
\def \gam {{\gamma}}
\def \del {{\delta}}
\def \eps {{\varepsilon}}
\def \kap {{\kappa}}
\def \lam {{\lambda}}
\def \ome {{\omega}}
\begin{document}

\author{Sam Chow \and Niclas Technau}
\address{Mathematics Institute, Zeeman Building, University of Warwick, Coventry CV4 7AL, United Kingdom}
\email{sam.chow@warwick.ac.uk}
\address{Graz University of Technology, Institute of Analysis and Number Theory, Steyrergasse 30/II, 8010
Graz, Austria}
\email{ntechnau@tugraz.at}

\title{Dispersion and Littlewood's conjecture}
\subjclass[2020]{11J83 (primary); 11J70, 28A78, 42A16  (secondary)}
\keywords{Diophantine approximation, uniform distribution, lacunary sequences, Rajchman measures}

\maketitle
\begin{abstract}
Let $\varepsilon>0$.
We construct an explicit, full-measure set of 
$\alpha \in[0,1]$ such that if $\gam \in \bR$ then, for almost all 
$\bet \in[0,1]$, if $\del \in \bR$ then there are 
infinitely many integers $n\geq1$ for which
\[
n \Vert n\alpha
- \gamma \Vert \cdot \Vert n\bet - \del \Vert < 
\frac{(\log \log n)^{3 + \eps}}{\log n}.
\]
This is a significant quantitative improvement over a result of the first author and Zafeiropoulos.
We show, moreover, that the exceptional set of $\bet$ has Fourier dimension zero, alongside further applications to badly approximable numbers and to lacunary diophantine approximation.
Our method relies on 
a dispersion estimate
and the Three Distance Theorem.
\end{abstract}

\section{Introduction}

Littlewood's conjecture (circa 1930) is a central problem in diophantine approximation. It states that if $\alp, \bet \in \bR$ then
\[
\liminf_{n \to \infty} n \| n \alp \| \cdot \| n \bet \| = 0,
\]
where $\| \cdot \|$ denotes the distance to the nearest integer. The problem lies at the heart of Margulis's measure rigidity conjecture \cite{Mar2000} and, as such, is also a holy grail of homogeneous dynamics.

Gallagher \cite{Gal1962} determined the multiplicative approximation rate of a generic pair of reals. We write $\bN = \bZ_{>0}$ throughout.

\begin{thm}
[Gallagher, 1962]
Let $\psi: \bN \to (0,\infty)$ be monotonic, and denote by $W_2^\times(\psi)$ the set of 
$(\alp, \bet) \in [0,1]^2$ such that
\[
\| n \alp \| \cdot 
\| n \bet \| < \psi(n)
\]
holds for infinitely many $n \in \bN$. Then the Lebesgue measure of $W_2^\times(\psi)$ is
\[
\begin{cases}
0, &\text{if } \displaystyle \sum_{n=1}^\infty \psi(n) \log n < \infty \\
\\
1, &\text{if } \displaystyle 
\sum_{n=1}^\infty \psi(n) \log n = \infty.
\end{cases}
\]
\label{GallagherThm}
\end{thm}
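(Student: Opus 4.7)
The plan is to treat the convergence and divergence halves separately: a direct Borel--Cantelli argument for the former, and a zero-one law combined with a Chung--Erd\H{o}s second moment argument for the latter.

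For the convergence half, let $A_n := \{(\alp,\bet)\in[0,1]^2 : \|n\alp\|\|n\bet\|<\psi(n)\}$. A short Fubini calculation, using that the $\bet$-fiber at fixed $\alp$ has measure $\min(1, 2\psi(n)/\|n\alp\|)$, yields $\meas(A_n) \asymp \psi(n) \log(1/\psi(n))$ whenever $\psi(n) \le 1/4$. To convert the hypothesis $\sum \psi(n) \log n < \infty$ into the summability of $\meas(A_n)$, I would enlarge $\psi$ to $\widetilde\psi(n) := \max(\psi(n), (n(\log n)^3)^{-1})$; this only enlarges $W_2^\times(\psi)$, preserves $\sum \widetilde\psi(n)\log n < \infty$, and forces $\log(1/\widetilde\psi(n)) \ll \log n$. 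Standard Borel--Cantelli then gives $\meas(W_2^\times(\widetilde\psi))=0$.

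For the divergence half, first invoke a Cassels--Gallagher style zero-one law: being a $\limsup$ of events invariant under the dilations $(\alp,\bet)\mapsto(p\alp,p\bet)$ mod $1$ for every integer $p\ge 1$, the set $W_2^\times(\psi)$ has Lebesgue measure $0$ or $1$. It therefore suffices to establish positive measure, to which end I would apply Chung--Erd\H{o}s to $S_N := \sum_{n\le N} \ind_{A_n}$. Under the monotonicity of $\psi$ and the hypothesis $\sum \psi(n) \log n = \infty$, a dyadic argument (showing that $\psi \log(1/\psi)$ cannot decay too fast once $\psi \log n$ has the correct divergence) ensures that $\sum \psi(n) \log(1/\psi(n))$ diverges as well, so the first moment $\bE[S_N] = \sum_{n\le N}\meas(A_n)$ tends to infinity.

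The main obstacle is bounding the correlations $\meas(A_m\cap A_n)$ for $m\neq n$. The goal is an estimate of the shape $\meas(A_m\cap A_n) \ll \meas(A_m)\meas(A_n) + \text{error}$, where the error is controlled by arithmetic correlations involving $\gcd(m,n)$. This rests on counting or Fourier-analytic bounds of the form $\meas\{\alp:\|m\alp\|<a,\|n\alp\|<b\} \ll ab + \gcd(m,n)\min(a,b)/\max(m,n)$, combined with the elementary estimate $\sum_{m,n\le N}\gcd(m,n)/(mn)\ll (\log N)^2$, so that the total variance remains $O(\bE[S_N]^2)$. Feeding this into Chung--Erd\H{o}s yields a uniform lower bound $\bP(S_N\ge 1) \gtrsim 1$, whence $W_2^\times(\psi)$ has positive measure, which the zero-one law promotes to full measure.
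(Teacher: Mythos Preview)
The paper does not give its own proof of this statement: Theorem~\ref{GallagherThm} is quoted in the introduction as Gallagher's 1962 result, with a bare citation to \cite{Gal1962}, and is used only as historical motivation. There is therefore nothing in the paper to compare your proposal against.

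On the substance of your sketch: the overall architecture (Borel--Cantelli for convergence, a Cassels--Gallagher zero--one law plus a second moment/Chung--Erd\H{o}s argument for divergence) is indeed the classical route. The convergence half is essentially fine; your trick of enlarging $\psi$ to force $\log(1/\psi(n))\ll\log n$ is standard and correct. The weak point is the correlation step in the divergence half. The sets $A_n$ are hyperbolic regions, not products of short intervals, so the one-dimensional overlap estimate you quote does not apply directly to $\meas(A_m\cap A_n)$. Gallagher's actual argument first decomposes each $A_n$ into $O(\log(1/\psi(n)))$ dyadic rectangles and then applies the one-dimensional gcd-type overlap bound coordinatewise; this introduces extra logarithmic factors that have to be tracked carefully, and the resulting double sum over $m,n$ is not handled by the crude bound $\sum_{m,n\le N}\gcd(m,n)/(mn)\ll(\log N)^2$ alone. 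As written, your variance estimate is asserted rather than proved, and that is precisely where the work lies.
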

\noindent
In particular, for Lebesgue almost all $(\alp, \bet) \in \bR^2$, we have
\[
\liminf_{n \to \infty} 
n (\log n)^2 (\log \log n) \| n \alp \| \cdot \| n \bet \| = 0.
\]
However, for the purpose of proving Littlewood's conjecture, we may assume that $\alp_1, \alp_2$ are badly approximable. Here we recall that $\alp \in \bR$ is \emph{badly approximable} if 
\[
\inf \{
n \| n \alp \| : n \in \bN \}
> 0.
\]
The set $\Bad$ of badly approximable numbers has Lebesgue measure zero so, in some sense, Gallagher's theorem fails to capture the essence of Littlewood's conjecture.
With this in mind, Pollington and Velani \cite{PV2000} established that a similar result holds for many badly approximable numbers. We write $\dimh(\cA)$ for the Hausdorff dimension of a Borel set $\cA \subseteq \bR$.

\begin{thm} [Pollington--Velani, 2000]
\label{PVthm}
Let $\alp \in \Bad$. Then there exists $\cG \subseteq \Bad$ with $\dimh(\cG) = 1$ such that if $\bet \in \cG$ then
\[
\liminf_{n \to \infty}
n (\log n)
\| n \alp \|
\cdot 
\| n \bet \| \le 1.
\]
\end{thm}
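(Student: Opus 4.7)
My plan is to build $\cG$ as a Cantor-like subset of $\Bad$, with nested intervals adapted to the convergent denominators of $\alp$. Since $\alp$ is badly approximable, its continued-fraction denominators $(q_k)$ satisfy $q_{k+1} \leq M q_k$ for some $M = M(\alp) > 1$, and $q_k \|q_k \alp\| \leq 1$ for every $k$. Testing the inequality at $n = q_k$, the conclusion $n (\log n) \|n \alp\| \cdot \|n \bet\| \leq 1$ reduces to producing $\bet \in \Bad$ with
\[
\|q_k \bet\| \leq \frac{1}{\log q_k}
\]
for infinitely many $k$.

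To produce such $\bet$, I would construct a nested family of intervals level by level. At level $k$, the set $\{\bet \in [0,1] : \|q_k \bet\| \leq 1/\log q_k\}$ is a union of $q_k$ intervals of length $2/(q_k \log q_k)$ centred at the fractions $j/q_k$. The lacunarity bound $q_{k+1} \leq M q_k$ ensures that any fixed level-$k$ interval contains many level-$(k+1)$ candidates, so one can organise a nested family whose limit points automatically satisfy the multiplicative condition.

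To keep the limit inside $\Bad$, I would interleave with the classical Cantor construction for badly approximable numbers: from each level-$k$ interval, excise a neighbourhood $\{\bet : |q\bet - p| < \eps/q\}$ around every rational $p/q$ with $q_{k-1} < q \leq q_k$, for a small absolute parameter $\eps > 0$. A routine counting argument bounds the total length removed from each level-$k$ interval by a small fraction of its length, so a positive proportion of every surviving interval can be subdivided at the next level.

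The main obstacle is showing $\dimh(\cG) = 1$. Here I would use the mass distribution principle: place a probability measure $\mu$ on $\cG$ assigning roughly uniform mass to the surviving sub-intervals at each level, then bound $\mu(B) \ll r^s$ for every ball $B$ of radius $r$ and every $s < 1$. Two thinning mechanisms compete: the rational excision, which costs only an absolute constant per level and so is harmless for dimension, and the multiplicative constraint, which shrinks each level-$k$ interval by a factor $\log q_k$. The saving grace is that $q_k$ grows geometrically, so $\log q_k = O(k)$; the logarithmic losses therefore accumulate only polynomially in $k$, while the number of surviving sub-intervals accumulates exponentially. Balancing these rates carefully yields $\dimh(\cG) \geq 1 - o(1)$ as the construction is restricted to arbitrarily late levels, and hence $\dimh(\cG) = 1$.
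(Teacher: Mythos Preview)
The paper does not give its own proof of Theorem~\ref{PVthm}; it is quoted from \cite{PV2000} as background. The paper's nearest contribution is Theorem~\ref{cor2}, an inhomogeneous variant with a slightly weaker rate, and its proof proceeds along a different route: one applies the dispersion estimate of Theorem~\ref{thm: dispersion} to the lacunary sequence of Lemma~\ref{lem: 3DT Consequence}, taking $\mu$ to be Kaufman's Fourier-decaying measure on $\Bad$, and then reads off full Hausdorff dimension from the Frostman property via the mass distribution principle. Your direct Cantor-set construction is closer in spirit to the original Pollington--Velani argument than to anything in this paper.

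That said, there is a genuine gap in your sketch. You assert that ``the lacunarity bound $q_{k+1} \le M q_k$ ensures that any fixed level-$k$ interval contains many level-$(k+1)$ candidates''. The opposite is true. A level-$k$ interval has length $2/(q_k \log q_k)$, while the level-$(k+1)$ centres $j/q_{k+1}$ are spaced $1/q_{k+1}$ apart, so a level-$k$ interval meets at most
\[
1 + \frac{2q_{k+1}}{q_k \log q_k} \le 1 + \frac{2M}{\log q_k}
\]
of them, which is at most one once $k$ is large. The upper bound $q_{k+1} \le M q_k$ hurts rather than helps here; to get many children you would need a \emph{lower} bound of the shape $q_{k+1} \gg q_k \log q_k$, which fails for consecutive convergents of a badly approximable $\alp$. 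The standard repair is to work along a sparse subsequence $(q_{k_i})$ with, say, $q_{k_{i+1}} \asymp q_{k_i}^R$ for a large parameter $R$, so that each stage-$i$ interval contains of order $q_{k_i}^{R-1}/\log q_{k_i}$ stage-$(i+1)$ candidates; the dimension calculation must then be redone along this sparser sequence. Your closing phrase ``restricted to arbitrarily late levels'' does not address the issue, since starting late does nothing to improve the ratio $q_{k+1}/(q_k \log q_k)$.
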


The assumption that $\alp \in \Bad$ was subsequently relaxed by the first author and Zafeiropoulos \cite{CZ2021}, and the result made `fully inhomogeneous'. We require some notation to state it. Given $\alpha\in[0,1]$, we consider its partial
quotients $a_{k}(\alpha)$ and its continuants $q_{k}(\alpha)$. Define
\[
\mathcal{K}:=
\{\alpha \in \bR:
\,\Lambda(\alpha)<\infty\}
\quad \mathrm{where} \quad 
\Lambda(\alpha) =\sup \left 
\{ \frac{\log q_{k}(\alpha)}{k}:\,k\geq1 \right \}.
\]
The set $\cK$ includes almost all real numbers, as $\Lambda(\alpha)$ almost surely equals the L\'evy constant:
\[
\Lambda(\alpha)=\frac{\pi^{2}}{12\log2}\approx1.187.
\]
Moreover, $\Bad \subset \mathcal{K}$, since badly approximable numbers have bounded partial quotients. 

\begin{thm} [Chow--Zafeiropoulos, 2021]
Let $\alp \in \cK$ and $\gam, \del \in \bR$. Then there exists $\cG \subseteq \Bad$ with $\dimh(\cG) = 1$ such that if $\bet \in \cG$ then
\[
\liminf_{n \to \infty}
n (\log n)
\| n \alp - \gam \|
\cdot 
\| n \bet - \del \| 
\le 1.
\]
\end{thm}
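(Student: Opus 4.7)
The plan is to construct, for fixed $\alp\in\cK$ and $\gam,\del\in\bR$, a Cantor-like subset $\cG\subseteq\Bad$ of full Hausdorff dimension, each element of which satisfies the desired liminf inequality.

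First I would select a sequence $n_1<n_2<\cdots$ along which $\|n_k\alp-\gam\|$ is as small as the pigeonhole principle allows. The Three Distance Theorem asserts that $\{n\alp\bmod 1:n\le N\}$ partitions the circle into intervals of length $\ll 1/N$; hence for every $N$ there exists $n\le N$ with $\|n\alp-\gam\|\ll 1/N$. Letting $N$ range over the continuants $q_k(\alp)$ and invoking $\alp\in\cK$ to bound $q_{k+1}/q_k$ geometrically produces a sequence $(n_k)$ with $\|n_k\alp-\gam\|\le C/n_k$ and with well-controlled growth. Setting
\[
E_k:=\{\bet\in[0,1]:\|n_k\bet-\del\|\le 1/(C\log n_k)\},
\]
every $\bet\in\limsup_k E_k$ satisfies the desired inequality for infinitely many $k$, so it remains to show $\dimh(\Bad\cap\limsup_k E_k)=1$.

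Next I would build nested compact sets $\cG_0\supseteq\cG_1\supseteq\cdots$, with each $\cG_k$ a finite union of sub-intervals of $E_k$ lying inside the components of $\cG_{k-1}$, further restricted to avoid the $\asymp q^{-2}$-neighbourhoods of rationals $p/q$ of height below an appropriate threshold; the latter constraint forces $\cG:=\bigcap_k\cG_k\subseteq\Bad$. The natural limit measure supported on $\cG$ would then be analysed via the mass-distribution principle.

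The main obstacle will be the dimension lower bound. For $\dimh\cG=1$ to hold, the spacing of $(n_k)$ has to be chosen so that the logarithmic density loss of $E_k$ at each scale is absorbed, and the $\Bad$-avoidance step must excise only a negligible proportion of each parent. Tuning $(n_k)$ along a suitable subsequence of continuants---faster than the minimum forced by the Three Distance Theorem, but still admissible thanks to $\alp\in\cK$---is the technical crux. The factor $\log n$ in the theorem is precisely the slack that makes this balance feasible; its removal would amount to a direct attack on Littlewood's conjecture itself.
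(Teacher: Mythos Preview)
The statement is quoted in the paper as a prior result from \cite{CZ2021}; it is not reproved here, but the strictly stronger Theorem~\ref{cor2} is, via a route quite different from yours. Both arguments begin with the Three Distance Theorem (packaged here as Lemma~\ref{lem: 3DT Consequence}) to extract a lacunary sequence with $n_t\|n_t\alp-\gam\|\le 8$. After that you propose a hand-built Cantor set: at each stage intersect with $E_k=\{\bet:\|n_k\bet-\del\|\le 1/(C\log n_k)\}$ and excise neighbourhoods of rationals, then apply the mass distribution principle. The paper instead takes Kaufman's ready-made measure $\mu$ on $\Bad$, which already has Frostman exponent arbitrarily close to $1$ and polynomial Fourier decay, and proves via the dispersion estimate of Theorem~\ref{thm: dispersion} that the exceptional set has $\mu$-measure zero; the mass distribution principle then gives $\dimh=1$ with no explicit nested construction. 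Your approach is closer in spirit to Pollington--Velani \cite{PV2000} and presumably to the original \cite{CZ2021}; it buys concreteness and avoids Fourier analysis, but the two issues you yourself flag---absorbing the $1/\log n_k$ density loss by tuning the subsequence, and showing the $\Bad$-avoidance excisions are negligible at every scale---are the real work and are not carried out in your sketch. The paper's approach buys uniformity in $\del$ (its $\cG$ does not depend on $\del$, whereas yours manifestly does) and a sharper rate, at the cost of importing Kaufman's construction and the Fourier-analytic machinery of \S\S2--3.
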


Haynes, Jensen and Kristensen \cite{HJK2014} investigated a similar problem in which $\cG$ does not depend on $\del$. Their main result was sharpened by the second author and and Zafeiropoulos \cite{TZ2020}, and then generalised by the first author and Zafeiropoulos~\cite{CZ2021}.

\begin{thm} [Chow--Zafeiropoulos, 2021]
\label{CZmain}
Let $\eps > 0$, $\alp \in \cK$ and $\gam \in \bR$. Then there exists $\cG \subseteq \Bad$ with $\dimh(\cG) = 1$ such that if $\bet \in \cG$ and $\del \in \bR$ then
\[
n \| n \alp - \gam \|
\cdot 
\| n \bet - \del \|
< \frac{(\log \log \log n)^{\eps + 1/2}}{\sqrt{\log n}}
\]
has infinitely many solutions $n \in \bN$.
\end{thm}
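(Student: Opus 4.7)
My plan is to construct $\cG$ via a Cantor scheme that simultaneously forces $\beta$ to lie in $\Bad$ and forces the multiplicative inequality to hold for infinitely many $n$. The scheme is driven by a sparse sequence $n_1 < n_2 < \ldots$ of integers extracted from the continued fraction expansion of $\alpha$, along which $\|n_j\alpha - \gamma\|$ is already small; this reduces the original problem to showing that $\|n_j\beta - \delta\|$ is sufficiently small for infinitely many $j$, for a set of $\beta$ of full Hausdorff dimension inside $\Bad$.

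First I would exploit the hypothesis $\alpha \in \cK$, which forces $\log q_k \le \Lambda k$ for the continuants $q_k = q_k(\alpha)$. For each $k$, the inhomogeneous Three Distance Theorem applied to $(m\alpha - \gamma \bmod 1)_{m \le q_{k+1}}$ produces an integer $n_k$ with $n_k \asymp q_k$ and $\|n_k\alpha - \gamma\| \ll 1/q_k$, so that $n_k \|n_k \alpha - \gamma\| = O(1)$ while $\log n_k \asymp k$. With this choice the target inequality reduces to requiring
\[
\|n_k\beta - \delta\| \ll \frac{(\log \log k)^{\eps + 1/2}}{\sqrt{k}}
\]
for infinitely many $k$, a one-dimensional inhomogeneous question about $\beta$ that is independent of $\delta$ up to a shift of the target point.

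Next I would build $\cG = \bigcap_j \cE_j$ as a nested intersection of finite unions of closed intervals. In the step $\cE_{j-1} \mapsto \cE_j$, inside each interval $I$ of $\cE_{j-1}$ I would retain the sub-intervals on which $\|n_j\beta - \delta\| < \psi_j$, where $\psi_j \asymp (\log\log j)^{\eps + 1/2}/\sqrt{j}$; by the Three Distance Theorem applied to $(m\beta)_{m \le n_j}$, the admissible $\beta$ form a nearly equispaced family of subintervals inside $I$, and a dispersion estimate — a quantitative equidistribution bound with essentially square-root savings — pins down how close to equispaced they are, and hence the precise count and spacing of the survivors. I would then excise any sub-subintervals on which $\beta$ admits too good a rational approximation of small denominator, so that the limit set lies in $\Bad$. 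Feeding the resulting lower bounds on the surviving intervals into the mass distribution principle should then give $\dimh(\cG) = 1$.

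The main obstacle is balancing two competing constraints at every stage: $\psi_j$ must be small enough that the nested limit satisfies the target inequality, yet large enough that the surviving sub-intervals at stage $j$ remain plentiful and sufficiently separated that the subsequent excision of $\beta$'s with small-denominator rational approximants still leaves many descendants. The dispersion estimate is precisely what makes both demands compatible: without a square-root saving over the trivial pigeonhole count one cannot achieve the $1/\sqrt{\log n}$ rate while simultaneously maintaining $\dimh(\cG) = 1$ and $\cG \subseteq \Bad$, and getting this estimate in a form robust enough to survive the Cantor-style iteration and the removal of the $\Bad$-complement is where the real work will lie.
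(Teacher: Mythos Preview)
Your first step --- extracting a lacunary sequence $(n_k)$ with $n_k\|n_k\alpha-\gamma\|=O(1)$ via the Three Distance Theorem --- matches the paper (Lemma~\ref{lem: 3DT Consequence}). The gap is in the Cantor scheme. You propose to retain, at stage $j$, the subintervals on which $\|n_j\beta-\delta\|<\psi_j$; but $\delta$ is universally quantified \emph{after} $\beta$, so $\cG$ must be independent of $\delta$, whereas your $\cE_j$ visibly is not. What is actually required of each $\beta\in\cG$ is that the dispersion of the whole orbit $\{n_1\beta,\ldots,n_J\beta\}\bmod 1$ be small for all large $J$, and this global condition couples every previous stage; it is not something one can impose arc-by-arc while still counting survivors. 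Your appeal to the Three Distance Theorem for $(m\beta)_{m\le n_j}$ is also misplaced: the $\beta$-solution set of $\|n_j\beta-\delta\|<\psi_j$ is trivially $n_j$ equispaced arcs, with no hidden gap structure to exploit.

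The paper --- and the original argument in \cite{CZ2021} that this theorem cites --- bypasses any direct Cantor construction. One takes Kaufman's measure $\mu$ supported on $\Bad$, which has polynomial Fourier decay and Frostman exponent arbitrarily close to $1$. Fourier analysis against $\mu$ (Erd\H{o}s--Tur\'an combined with the decay of $\widehat\mu$ in \cite{CZ2021,TZ2020}; high moments in the present paper) shows that for $\mu$-almost every $\beta$ the lacunary orbit $(n_k\beta)$ has small discrepancy, hence small dispersion uniformly in the shift; the $(\log n)^{-1/2}$ rate in Theorem~\ref{CZmain} is exactly the square-root loss inherent in Erd\H{o}s--Tur\'an, not a by-product of Cantor bookkeeping. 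Setting $\cG$ to be the $\mu$-full subset of $\Bad$ on which this holds, the mass distribution principle gives $\dimh(\cG)=1$ immediately. The use of Kaufman's measure and its Fourier decay is the missing ingredient in your plan.
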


We present the following stronger version.

\begin{thm}
\label{cor2}
Let $\eps > 0$, $\alp \in \cK$ and $\gam \in \bR$. Then there exists $\cG \subseteq \Bad$ with $\dimh(\cG) = 1$ such that if $\bet \in \cG$ and $\del \in \bR$ then
\begin{equation}
\label{rate}
n \| n \alp - \gam \|
\cdot 
\| n \bet - \del \|
< \frac{(\log \log n)^{3 + \eps}}{\log n}
\end{equation}
has infinitely many solutions $n \in \bN$.
\end{thm}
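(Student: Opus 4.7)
The plan is a Cantor-type construction inside $\Bad$, iterated over a rapidly growing sequence of scales $N_k \to \infty$ (e.g.\ $N_k = 2^{2^k}$), whose new input is a quantitative dispersion estimate. Fix $\alp \in \cK$, $\gam \in \bR$, and $\eps > 0$. At each scale $N_k$ I would use the Three Distance Theorem applied to $\{n\alp : 1 \le n \le N_k\}$ to extract a structured set $S_k \subseteq [1, N_k]$ of integers for which $\|n\alp - \gam\|$ is so well controlled that the target inequality at $n \in S_k$ reduces to
\[
\|n\bet - \del\| < \psi_k \asymp \frac{(\log\log N_k)^{3 + \eps}}{\log N_k}.
\]
The hypothesis $\alp \in \cK$ (continuants growing at most exponentially) together with the three-gaps structure should allow $|S_k|$ of order $\log N_k / (\log\log N_k)^{O(1)}$; this is precisely the regime in which $|S_k|\,\psi_k \to \infty$.

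Next, I would estimate
\[
E_k(\del) := \{\bet \in [0,1] : \|n\bet - \del\| \ge \psi_k \text{ for every } n \in S_k\}
\]
against a Frostman measure $\mu$ on a full-dimensional Cantor subset of $\Bad$ obtained by restricting partial quotients. The natural approach is a second-moment bound on the counting function $\#\{n \in S_k : \|n\bet - \del\| < \psi_k\}$: expanding the square and Fourier-expanding the indicator produces a diagonal main term of size $|S_k|\,\psi_k$ together with an off-diagonal error governed by $\widehat{\mu}(n - n')$ for $n, n' \in S_k$. Provided $\mu(E_k(\del))$ is summable in $k$ and uniform in $\del$ (the latter handled by a net argument combined with Lipschitz dependence in $\del$), a Borel--Cantelli step and the mass distribution principle produce a $\cG \subseteq \Bad$ with $\dimh(\cG) = 1$ on which, for every $\del \in \bR$, the inequality of Theorem~\ref{cor2} admits infinitely many solutions.

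The main obstacle is the dispersion estimate itself --- specifically, extracting enough cancellation from the off-diagonal terms to permit $|S_k|$ as small as $\log N_k / (\log\log N_k)^{O(1)}$. This saving over the essentially trivial $|S_k| \asymp \sqrt{\log N_k}$ input to Theorem~\ref{CZmain} is precisely what upgrades the approximation rate from $1/\sqrt{\log n}$ to $1/\log n$. The Three Distance Theorem ensures $S_k$ is a union of a bounded number of arithmetic progressions, so the off-diagonal sum reduces to estimating $\widehat{\mu}$ over a structured set of differences; Fourier decay of $\mu$ --- consistent with the Fourier-dimension statement advertised in the abstract --- is what should render this tractable.
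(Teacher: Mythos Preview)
Your overall architecture --- Three Distance Theorem to manufacture a set on which $n\|n\alp-\gam\|$ is bounded, a dispersion-type estimate for $\{n\bet\}$ along that set against a Kaufman-type Frostman measure on $\Bad$ with Fourier decay, a net in $\del$ for uniformity, Borel--Cantelli, and the mass distribution principle --- matches the paper's. But the \emph{second-moment} input you propose is genuinely too weak, and this is the crux of the improvement from $1/\sqrt{\log n}$ to $1/\log n$.

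Here is the arithmetic. In the paper's normalisation one works with the lacunary sequence $(n_t)$ furnished by Lemma~\ref{lem: 3DT Consequence}, so that $t\asymp_\alp\log n_t$; your $|S_k|$ corresponds to $T$ and your $\psi_k$ to $D/T$ with $D=(\log T)^{3+\eps}$. To get uniformity in $\del$ you must take a union bound over a net of size $\asymp T/D$. With a second moment and Chebyshev, even with perfect off-diagonal cancellation the variance of the count is $\asymp D$ and the mean is $D$, so $\mu(E_k(\del))\ll 1/D\asymp(\log T)^{-3-\eps}$. Multiplying by the net size gives $\asymp T(\log T)^{-6-2\eps}$, which along any sequence $T_k\to\infty$ is not summable. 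The paper instead computes the $2s$-th moment with $s\asymp \log T/\sqrt{\log\log T}$, obtaining
\[
\mu\bigl(|C_T-D|>D/2\bigr)\ll \frac{(2s)!(2R)^{2s}}{(\log T)^{(2+\eps)s}} = T^{-\eps\sqrt{\log\log T}+o(1)},
\]
which beats the net size $T/D$ with room to spare. The key combinatorial input enabling this high-moment bound is a uniform-in-$s$ count for lacunary linear inequalities (Lemma~\ref{GeneralRZ}), not merely Fourier decay of $\mu$ on differences.

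A secondary structural point: the paper's set is a \emph{lacunary} sequence $(n_t)$ with $n_{t+1}>2n_t$, extracted one point per dyadic range via the Three Distance Theorem; it is not a union of arithmetic progressions inside $[1,N_k]$. This lacunarity is exactly what feeds into the Rudnick--Zaharescu-type counting lemma that drives the high-moment estimate. Your AP picture would lead to a different moment computation, but in any case the second moment alone will not close the gap against the $\del$-net.
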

\noindent The quantitative threshold here is close to that of Pollington and Velani's in Theorem \ref{PVthm}.

For $\del \in \bR$, define
\[
\Bad(\del) =
\{ \bet \in \bR:
\inf \{ 
n \| n \bet - \del \|:
n \in \bN \} > 0 \},
\]
and set
\[
\cB = \{ (\bet, \del) \in \bR^2: \bet \in \Bad \cap \Bad(\del) \}.
\]
The short argument in \cite[Section 5]{CZ2021}, based on Marstrand's slicing theorem \cite[Theorem 5.8]{Fal1986} and a result of Tseng \cite{Tse2009} on twisted approximation, delivers the following consequence.

\begin{cor}
\label{cor3}
Let $\eps > 0$, $\alp \in \cK$ and $\gam \in \bR$. Then there exists $\cG \subseteq \cB$ with $\dimh(\cG) = 2$ such that if $(\bet,\del) \in \cG$ then \eqref{rate}
has infinitely many solutions $n \in \bN$.
\end{cor}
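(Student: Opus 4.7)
My plan is to combine Theorem \ref{cor2} with a fibre-by-fibre argument that promotes the one-dimensional statement for $\bet$ to a two-dimensional statement for the pair $(\bet,\del)$. Let $\cG_0 \subseteq \Bad$ denote the full-dimensional set of $\bet$ supplied by Theorem \ref{cor2}, so that $\dimh(\cG_0) = 1$ and, for every $\bet \in \cG_0$ and every $\del \in \bR$, inequality \eqref{rate} has infinitely many solutions $n \in \bN$. All the diophantine work is therefore already done; what remains is to carve out a subset of $\cB$ of full Hausdorff dimension.

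The natural choice is
\[
\cG := \{ (\bet, \del) \in \bR^2 : \bet \in \cG_0 \text{ and } \bet \in \Bad(\del) \},
\]
which lies in $\cB$ since $\cG_0 \subseteq \Bad$, and at every point of which \eqref{rate} has infinitely many integer solutions. To estimate the dimension of $\cG$ I would fix $\bet \in \cG_0 \subseteq \Bad$ and apply Tseng's theorem \cite{Tse2009}, which asserts that the set $\{ \del \in [0,1] : \bet \in \Bad(\del) \}$ is winning for Schmidt's game and hence has Hausdorff dimension $1$. Consequently every vertical fibre $\cG_{\bet} := \{\del : (\bet,\del) \in \cG\}$ with $\bet \in \cG_0$ has full Hausdorff dimension.

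Invoking Marstrand's slicing theorem in its lower-bound form \cite[Theorem 5.8]{Fal1986} then gives
\[
\dimh(\cG) \ge \dimh(\cG_0) + \inf_{\bet \in \cG_0} \dimh(\cG_{\bet}) = 1 + 1 = 2,
\]
and the matching upper bound is trivial since $\cG \subseteq \bR^2$. The main (minor) obstacle I anticipate is securing enough measurability of $\cG$ to legitimately apply Marstrand; this should follow from the fact that Tseng's winning sets are $G_\delta$, making $\cG$ a Borel subset of $\bR^2$. Beyond this routine verification no new diophantine input is required, which is why the deduction from \cite{CZ2021} is as short as advertised.
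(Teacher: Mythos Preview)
Your proposal is correct and follows essentially the same route as the paper: the paper explicitly states that Corollary~\ref{cor3} is obtained from Theorem~\ref{cor2} via the short argument in \cite[Section~5]{CZ2021}, which combines Tseng's twisted-approximation result \cite{Tse2009} with Marstrand's slicing theorem \cite[Theorem~5.8]{Fal1986}, exactly as you outline. Your identification of the fibre set $\cG_\bet$ and the use of Schmidt-game winning to secure $\dimh(\cG_\bet)=1$ match the intended argument, and the measurability point you flag is indeed the only thing to check.
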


\noindent The upshot is that here $\bet$ is $\del$-inhomogeneously badly approximable, which is ``natural and desirable'' for these problems, as discussed in the introduction of \cite{PVZZ2022}.

The Fourier decay of certain measures is an essential ingredient in the proof of Theorem \ref{cor2}. Before presenting further applications, we contextualise this using the notion of Fourier dimension. For Borel $\cA \subseteq \bR$, write $\cM(\cA)$ for the set of Borel probability measures $\mu$ supported on a compact subset of $\cA$. For $\mu \in \cM(\cA)$ and $\xi \in \bR$, we write
\[
\widehat \mu(\xi)
= \int_\bR e(-\xi x)
\d \mu(x)
\]
for its Fourier transform at $\xi$, where $e(y) = e^{2 \pi i y}$. If $\cA \subseteq [0,1]$ then, for $\xi \in \bZ$, this matches the definition of the $\xi^{\mathrm{th}}$ Fourier coefficient \cite[\S 3]{Mat2015}.
The \emph{Fourier dimension} of $\cA$ is 
\[
\dimf(\cA) = \sup \{
s \in \bR:
\exists \mu \in \cM(\cA)
\qquad
\widehat \mu(\xi) \ll_s 
(1+|\xi|)^{-s/2} \quad (\xi \in \bR) \}.
\]

\begin{remark}
Here $f \ll g$ means that $|f| \le C|g|$ pointwise, for some $C$, and the subscript $s$ means that $C$ is allowed to depend on $s$. We will use this, as well as other Vinogradov and Bachmann--Landau notations, liberally throughout.
\end{remark}

\begin{thm} 
\label{MainVariant}
Let $\eps > 0$, $\alp \in \cK$ and $\gam \in \bR$. Denote by $\cE$ the set of $\bet \in [0,1]$ for which there exists $\del \in \bR$ such that \eqref{rate} has only finitely many solutions $n \in \bN$.
Then $\dimf(\cE) = 0$.
\end{thm}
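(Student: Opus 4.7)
The plan is to argue by contradiction. Suppose $\dimf(\cE) > 0$; then for some $s > 0$ there exists $\mu \in \cM(\cE)$ with $|\widehat{\mu}(\xi)| \ll_s (1+|\xi|)^{-s/2}$. I aim to show that $\mu(\cE) = 0$, contradicting the hypothesis that $\mu$ is supported on $\cE$. The strategy is to re-run the proof of Theorem \ref{cor2}, with $\mu$ replacing Lebesgue measure on the $\bet$-variable throughout, and to conclude that for $\mu$-almost every $\bet$, the inequality \eqref{rate} has infinitely many integer solutions $n$ for every $\del \in \bR$; such $\bet$ lie outside $\cE$ by definition.

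The proof of Theorem \ref{cor2} rests (by the abstract) on a dispersion estimate together with the Three Distance Theorem applied to the orbit of $\alp$. A dispersion estimate controls $\mu\{\bet : (n\bet \bmod 1) \notin I \text{ for all } n \leq N\}$ by expanding $\ds1_I$ as a Fourier series, giving an upper bound of the shape
\[
|I| + \sum_{0 < |\xi| \leq \Xi} c_\xi\,|\widehat\mu(\xi)| + \text{(tail)},
\]
where the $c_\xi$ come from the exponential sum over the orbit and $\Xi$ is a cutoff parameter. The polynomial decay of $\widehat\mu$ produces a power saving in $N$, and Borel--Cantelli along a suitable sequence $N_k$ then gives, for $\mu$-almost every $\bet$, a hit in every window $(N_k, N_{k+1}]$ of the target arc prescribed (via the Three Distance Theorem applied to $\alp$) for the desired multiplicative approximation.

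The delicate point, and in my view the main obstacle, is uniformity in $\del$: the argument outlined above handles each fixed $\del$, whereas membership in $\cE$ is defined by the \emph{existence} of a bad $\del$. I would treat this by a standard discretisation: in each window $(N_k, N_{k+1}]$ approximate $\del$ by a grid on $[0,1]$ of mesh $\asymp N_k^{-1}(\log N_k)^{-10}$, take a union bound over the $O(N_k (\log N_k)^{10})$ grid points, and rely on the power saving $N_k^{-s/2}$ from the Fourier decay to absorb the union-bound loss. Moving $\del$ within a cell changes $\|n\bet-\del\|$ by at most $(\log N_k)^{-10}/N_k$, which is comfortably negligible against the right-hand side of \eqref{rate} at $n \asymp N_k$, so a hit for the nearest grid $\del$ transfers to a genuine solution for the true $\del$.

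The remaining bookkeeping is to verify that the quantitative rate $(\log \log n)^{3+\eps}/\log n$ survives this adaptation: the per-window failure probability must be summable in $k$ even after the union bound over the $\del$-grid, and the exponents of $\log$ and $\log\log$ used in choosing $N_k$ and the arc length $|I|$ must be the same as in the proof of Theorem \ref{cor2}. Since the only change from the Lebesgue case is that Fourier coefficients of $\mu$ are now bounded by $(1+|\xi|)^{-s/2}$ rather than vanishing outside $\bZ \setminus \{0\}$, and this still provides a genuine power saving, the argument should go through with identical logarithmic exponents, completing the contradiction.
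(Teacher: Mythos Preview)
Your opening reduction is exactly right, and it is the entire content of the paper's proof: assuming $\dimf(\cE)>0$ produces a probability measure $\mu$ supported on (a compact subset of) $\cE\subseteq[0,1]$ with $\widehat\mu(\xi)\ll(1+|\xi|)^{-\tau}$ for some $\tau>0$, and then Theorem~\ref{MainThm} applied to this $\mu$ yields $\mu(\cE)=0$, contradicting $\mu(\cE)=1$. The paper states this in one sentence; you should simply cite Theorem~\ref{MainThm} and stop.

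Where your proposal goes astray is in the attempt to re-derive the conclusion $\mu(\cE)=0$ from scratch. Two points are worth flagging. First, your claimed ``power saving $N_k^{-s/2}$'' for the per-window failure probability is not what the Fourier decay of $\mu$ actually delivers. A second-moment/Chebyshev argument against a Rajchman measure gives only a saving of a few powers of $\log T$, which does \emph{not} survive a union bound over $\gg T/(\log T)^{O(1)}$ grid points; and even if one did obtain a saving of $N_k^{-s/2}$, your union bound over $N_k(\log N_k)^{10}$ grid points would require $s>2$, whereas $\dimf(\cE)>0$ only supplies an arbitrarily small $s>0$. The paper's genuine mechanism is a $(2s)$-th moment estimate with $s=\log T/\sqrt{\log\log T}$, fed by the lacunary diophantine inequality count of Lemma~\ref{GeneralRZ}; this yields a super-polynomial failure probability $T^{o(1)-\eps\sqrt{\log\log T}}$ that absorbs the union bound regardless of how small $\tau$ is. The Fourier decay of $\mu$ enters only to truncate the large-frequency tail in that moment computation, not as the source of the concentration.

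Second, your description of the ``dispersion estimate'' is not quite what the paper means. In the paper, dispersion is the maximal gap in $\{n_t\bet\bmod 1: T<t\le 2T\}$, and bounding it by $o(f(T))$ is \emph{already} the uniform-in-$\del$ statement: every arc of length $f(T)$ is hit, so in particular the arc centred at any $\del$. Your separate $\del$-discretisation is therefore not an additional step on top of dispersion; it is (with the correct mesh $\asymp D/T$, not $N_k^{-1}(\log N_k)^{-10}$) precisely how the dispersion bound is proved---it is the union bound over centres $c\in\cC$ in the proof of Theorem~\ref{thm: dispersion}.
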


This follows immediately from the following assertion. 

\begin{thm}
\label{MainThm}
Let $\eps > 0$, $\alp \in \cK$ and $\gam \in \bR$. Denote by $\cE$ the set of $\bet \in [0,1]$ for which there exists $\del \in \bR$ such that \eqref{rate} has only finitely many solutions $n \in \bN$. Let $\mu \in \cM([0,1])$, and assume that
\begin{equation} \label{mudecay}
\widehat{\mu} (\xi) \ll 
(1+|\xi|)^{-\tau}
\qquad (\xi \in \bZ)
\end{equation}
for some $\tau > 0$. Then $\mu(\cE) = 0$.
\end{thm}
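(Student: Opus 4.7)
The plan is to combine the Three Distance Theorem (to locate integers $n$ at which $\|n\alp - \gam\|$ is small) with a Fourier-analytic dispersion estimate for $\{n\bet \bmod 1\}$ (to force, for $\mu$-a.e.\ $\bet$, every $\del \in [0,1]$ to be well approximated by some such $n\bet$), and to conclude via Borel--Cantelli. Fix a lacunary sequence of scales $N_k$, say $N_k = 2^k$, and parameters $\eta_k, \rho_k \in (0,1)$ and $H_k \in \bN$ to be chosen later. The Three Distance Theorem applied to $\{n\alp\}_{n \le N_k}$, together with the hypothesis $\alp \in \cK$ (which bounds $\log q_j(\alp) \le \Lambda(\alp) j$ and so pins down a well-behaved subsequence of scales), produces a set
\[
A_k \;=\; \{\, n \in (N_k/2, N_k] \,:\, \|n\alp - \gam\| < \eta_k \,\}
\]
of cardinality $|A_k| \asymp N_k \eta_k$, for $\eta_k$ above the Dirichlet threshold $\asymp 1/N_k$. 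The goal is to show that, for $\mu$-a.e.\ $\bet$, for every $\del \in \bR$ and all sufficiently large $k$, there exists $n \in A_k$ with $\|n\bet - \del\| < \rho_k$; any such $n$ then satisfies $n \|n\alp - \gam\|\|n\bet - \del\| < N_k \eta_k \rho_k$, and the parameters will be chosen to make the latter smaller than $(\log\log N_k)^{3+\eps}/\log N_k$.

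The key step is the dispersion estimate. For $h \in \bZ \setminus \{0\}$, square-integrating against $\mu$ yields
\[
\int_{[0,1]} \Bigl| \sum_{n \in A_k} e(h n \bet) \Bigr|^2 d\mu(\bet)
\;=\; \sum_{n_1, n_2 \in A_k} \widehat{\mu}\bigl(h(n_2 - n_1)\bigr),
\]
and bounding the right-hand side via the Fourier decay \eqref{mudecay} produces a nontrivial $L^2$ bound on the exponential sum. Inserting these into an Erd\H{o}s--Tur\'an inequality (with frequency cutoff $H_k$) gives an upper bound on the discrepancy, hence on the dispersion, of $\{n\bet \bmod 1 : n \in A_k\}$, valid for all $\bet$ outside a $\mu$-exceptional set whose measure is estimable by Chebyshev's inequality.

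One then calibrates $N_k, \eta_k, \rho_k, H_k$ so that (i) $N_k \eta_k \rho_k \le (\log\log N_k)^{3+\eps}/\log N_k$, and (ii) the $\mu$-measures of the exceptional sets are summable in $k$. A Borel--Cantelli argument then forces, for $\mu$-a.e.\ $\bet$, eventual dispersion $\le \rho_k$, and so $\bet \notin \cE$. The main obstacle is condition (ii): since the decay exponent $\tau > 0$ is arbitrary and possibly very small, the gain from \eqref{mudecay} in the $L^2$ estimate is tight, so the cutoff $H_k$ and the size of $|A_k|$ must be tuned delicately against the arithmetic structure imposed by the Three Distance Theorem. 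The polylogarithmic factor $(\log\log n)^{3+\eps}$ in the conclusion is the precise cost of this optimisation, and it accounts for the significant quantitative improvement over the weaker $(\log\log\log n)^{1/2+\eps}/\sqrt{\log n}$ rate of Theorem~\ref{CZmain}.
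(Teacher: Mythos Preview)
Your architecture---Three Distance Theorem to control $\|n\alp-\gam\|$, then a dispersion/equidistribution input for $\{n\bet\}$, then Borel--Cantelli---is the paper's architecture. But the analytic engine you propose is too weak to reach the rate \eqref{rate}, and it differs from the paper in two essential respects.

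First, the paper does not work with dense sets $A_k\subset(N_k/2,N_k]$. Via Lemma~\ref{lem: 3DT Consequence} it extracts a genuinely \emph{lacunary} sequence $(n_t)$ with $n_{t+1}>2n_t$ and $n_t\|n_t\alp-\gam\|\le 8$, i.e.\ one integer per exponential scale, and then reduces Theorem~\ref{MainThm} to the lacunary statement Theorem~\ref{LacunaryApproximation}. All the hard analysis is done for an abstract lacunary sequence (Theorem~\ref{thm: dispersion}); the arithmetic of $\alp$ enters only through the existence of such a sequence.

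Second, and decisively, your $L^2$ bound plus Erd\H os--Tur\'an plus Chebyshev is essentially the discrepancy method of \cite{TZ2020,CZ2021}, and that is precisely what yields only the weaker rate $(\log\log\log n)^{1/2+\eps}/\sqrt{\log n}$ of Theorem~\ref{CZmain}. The obstruction is your condition (ii): a second moment gives at best polynomial-in-$(\log T)^{-1}$ decay of the exceptional measure, which cannot survive the union bound over the $\asymp T/D$ target intervals needed to witness dispersion, let alone be summable in $T$. The paper instead bounds dispersion directly via a smoothed counting function $C_{T,c}(\bet)$ and controls its $L^{2s}(\mu)$ norm for $s\asymp \log T/\sqrt{\log\log T}$. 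The lacunarity is exploited through the Rudnick--Zaharescu-type count of Lemma~\ref{GeneralRZ}, which bounds the number of solutions to $|\sum_j \ell_j n_{t_j}|\le K$ uniformly in the (growing) number of variables $2s$; this yields
\[
\mu\bigl(\{|C_T-D|>D/2\}\bigr)\ll \frac{(2s)!(2R)^{2s}}{(\log T)^{(2+\eps)s}} = T^{-\eps\sqrt{\log\log T}+o(1)},
\]
which is what makes the union bound over centres and over $T$ summable. Without the high-moment input and the lacunary solution count, the optimisation you allude to cannot close, and the exponent on $\log n$ stalls at $1/2$ rather than $1$.
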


\noindent Applying Theorem \ref{MainThm} with $\mu$ as Lebesgue measure furnishes the following result.

\begin{cor} \label{cor1}
Let $\eps > 0$, $\alp \in \cK$ and $\gam \in \bR$. Then, for almost all $\bet \in \bR$, if $\del \in \bR$ then \eqref{rate}
has infinitely many solutions $n \in \bN$.
\end{cor}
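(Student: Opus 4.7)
The plan is to derive Corollary \ref{cor1} as an immediate specialisation of Theorem \ref{MainThm}. The strategy has two ingredients: (a) verify that Lebesgue measure on $[0,1]$ satisfies the Fourier-decay hypothesis \eqref{mudecay}, and (b) transfer the resulting almost-everywhere statement from $[0,1]$ to $\bR$ by periodicity.

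First, I would observe that $\|n\bet - \del\|$ depends on $\bet$ only modulo the integers, since $\|n(\bet + m) - \del\| = \|n\bet - \del\|$ for every $m \in \bZ$. Consequently, the exceptional set $\cE \subseteq \bR$ of $\bet$ for which some $\del$ kills \eqref{rate} is $\bZ$-translation-invariant, so it suffices to show that $\cE \cap [0,1]$ is Lebesgue-null.

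Next, let $\mu$ denote Lebesgue measure on $[0,1]$; this is a Borel probability measure whose (compact) support lies in $[0,1]$, so $\mu \in \cM([0,1])$. A direct computation gives
\[
\widehat{\mu}(\xi) \;=\; \int_0^1 e(-\xi x) \, \d x \;=\; \begin{cases} 1, & \xi = 0, \\ 0, & \xi \in \bZ \setminus \{0\}, \end{cases}
\]
so \eqref{mudecay} holds trivially, with any positive $\tau$. Theorem \ref{MainThm} then yields $\mu(\cE \cap [0,1]) = 0$, and the periodicity observation from the previous step finishes the proof.

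The ``hard part'' is entirely hidden inside Theorem \ref{MainThm}; the corollary is merely the Lebesgue specialisation of that more general Fourier-analytic statement. The one subtle feature worth flagging is that \eqref{mudecay} is tested only at integer frequencies. This matches the classical fact that $\widehat{\mu}$ for Lebesgue measure on $[0,1]$ is the sinc function on $\bR$, whose merely linear decay would be too weak for a Fourier-dimension-style hypothesis in the sense of Theorem \ref{MainVariant}; here, by contrast, the integer Fourier coefficients vanish identically and the hypothesis is satisfied vacuously.
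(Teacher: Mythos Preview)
Your proposal is correct and matches the paper's approach: the paper simply states that Corollary~\ref{cor1} follows by applying Theorem~\ref{MainThm} with $\mu$ taken to be Lebesgue measure, and your argument is precisely the natural elaboration of that one-line deduction, including the periodicity step to pass from $[0,1]$ to $\bR$.
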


\noindent This sharpens \cite[Theorem 1.5]{CZ2021}, which has
roughly $(\log n)^{\eps-1/2}$ 
on the right hand side.

\bigskip

In the course of our work, we also establish some results in lacunary approximation.

\begin{thm}
\label{LacunaryApproximation}
Let $n_1, n_2, \ldots$ be a lacunary sequence of positive integers, let $\eps > 0$, and let $\mu$ be as in Theorem \ref{MainThm}. Then, for $\mu$-almost all $\bet \in [0,1]$, if $\del \in \bR$ then
\begin{equation}
\label{LacunaryRate}
\| n_t \bet - \del \| 
< \frac{(\log t)^{3 + \eps}}{t}
\end{equation}
has infinitely many solutions $t \in \bN$.
\end{thm}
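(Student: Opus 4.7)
My plan is to reduce the claim to a dispersion statement and derive it from a variance estimate plus Borel--Cantelli. Writing $\psi(t) = (\log t)^{3+\eps}/t$, it suffices to prove that, $\mu$-almost surely, for all sufficiently large dyadic $T = 2^k$ every $\del \in [0,1]$ lies within $\psi(T)$ of some $n_t \bet \bmod 1$ with $T/2 < t \le T$; since $\psi$ is decreasing this produces solutions $\|n_t\bet - \del\| < \psi(t)$ at arbitrarily large $t$. I would discretise using an $r_T$-net $\Delta_T \subset [0,1]$ with $r_T = \psi(T)/3$, so that $|\Delta_T| \asymp T/(\log T)^{3+\eps}$. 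The triangle inequality then reduces the dispersion statement to
\[
N_T(\bet, \del) := \#\{T/2 < t \le T : \|n_t \bet - \del\| < r_T\} \ge 1 \quad \text{for all } \del \in \Delta_T.
\]

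Next, I would compute the first and second moments of $N_T$ under $\mu$. After replacing the indicator by a smooth bump $\phi_T \approx \ind_{(-r_T,r_T)}$ and expanding $\phi_T(n_t\bet - \del) = \sum_\ell \widehat{\phi_T}(\ell) e(-\ell\del) e(\ell n_t \bet)$, the decay \eqref{mudecay} gives $\bE_\mu N_T(\bet,\del) = Tr_T + O(1) \asymp (\log T)^{3+\eps}$. For the variance the key identity is
\[
\int \Bigl|\sum_{T/2 < t \le T} e(\ell n_t \bet)\Bigr|^2 d\mu(\bet) = T/2 + \sum_{s \ne t} \widehat\mu\bigl(\ell(n_s - n_t)\bigr).
\]
Lacunarity $n_{t+1}/n_t \ge \rho > 1$ forces $|n_s - n_t| \gg \rho^{\max(s,t)}$, so by \eqref{mudecay} the off-diagonal terms are negligible, yielding $\mathrm{Var}_\mu N_T \ll \bE_\mu N_T$, i.e.\ Poisson-type scaling.

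Chebyshev alone then gives $\mu(N_T(\bet,\del) = 0) \ll (\log T)^{-3-\eps}$, which after a union bound over $\Delta_T$ is \emph{not} summable along $T = 2^k$. I would close the gap by iterating the variance computation to higher even moments: the quantity $\int (N_T - \bE_\mu N_T)^{2p} d\mu$ expands into a $2p$-fold Fourier sum which one controls via the dissociativity forced by lacunarity (any nontrivial combination $\sum_i \pm \ell_i n_{t_i}$ is large in absolute value, so its $\widehat\mu$-value is small). This should yield $\|N_T - \bE_\mu N_T\|_{L^{2p}(\mu)}^{2p} \ll (Cp\,\bE_\mu N_T)^p$, hence, at the optimal choice $p \asymp \bE_\mu N_T$, a subexponential tail
\[
\mu\bigl(N_T(\bet,\del) = 0\bigr) \le \exp\bigl(-c(\log T)^{3+\eps}\bigr)
\]
that easily survives the union bound over $\Delta_T$. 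Borel--Cantelli along $T = 2^k$ then concludes the proof.

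The hard part is precisely this high-moment upgrade. The second-moment dispersion bound is comparatively routine given \eqref{mudecay} and the lacunarity-induced decoupling, but Chebyshev cannot beat a union bound of polynomial size in $T$; extracting subexponential concentration from the combinatorics of lacunary sums is what buys the threshold $(\log t)^{3+\eps}/t$, close to the Pollington--Velani rate of Theorem~\ref{PVthm}, and any substantial loss in that moment bookkeeping would immediately degrade the exponent $3+\eps$.
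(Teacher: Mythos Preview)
Your outline is exactly the paper's strategy: reduce Theorem~\ref{LacunaryApproximation} to a dispersion estimate, smooth the count, expand in Fourier, bound high moments of the centred count, then union-bound over a $\psi(T)$-net and apply Borel--Cantelli. The paper carries this out in Section~3 (Theorem~\ref{thm: dispersion}) and then derives Theorem~\ref{LacunaryApproximation} in two lines.

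There is, however, a genuine gap precisely where you flag ``the hard part''. Your dissociativity assertion --- that any nontrivial combination $\sum_i \pm \ell_i n_{t_i}$ is large --- is false. With $|\ell_i|\le L\asymp T/(\log T)^{3+\eps}$ and $t_i\in(T/2,T]$, there are many small and vanishing combinations beyond mere pairings (for instance $\ell_1 n_{t_1}-\ell_2 n_{t_2}$ with $\ell_1/\ell_2$ close to $n_{t_2}/n_{t_1}$), and these dominate the moment. The paper controls them not by showing they are large but by \emph{counting} them: Lemma~\ref{GeneralRZ}, a uniform-in-$s$ refinement of a Rudnick--Zaharescu lemma, bounds the number of $(\ell_1,\dots,\ell_{2s},t_1,\dots,t_{2s})$ with $|\sum \ell_j n_{t_j}|\le K$ by $(2s)!\,R^{2s}(LT\log T)^{s}$. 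Splitting the Fourier sum at $|k|\le K=T^{2s/\tau}$, the small-$k$ part is handled by this count (Lemma~\ref{SmallIndex}) and the large-$k$ part by $|\widehat\mu(k)|\ll K^{-\tau}$ times the trivial bound.

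This yields the weaker moment bound
\[
\|C_0\|_{L^{2s}(\mu)}^{2s}\ \ll\ \frac{(2s)!\,R^{2s}D^{2s}}{(\log T)^{(2+\eps)s}},
\]
not the sub-Gaussian $(Cp\,\mathbb E N_T)^p$ you posit. Consequently the optimal moment order is not $p\asymp\mathbb E N_T$; the paper takes $s=\log T/\sqrt{\log\log T}$ and obtains a tail of size $T^{o(1)-\eps\sqrt{\log\log T}}$ rather than $\exp(-c(\log T)^{3+\eps})$. This is still comfortably summable after multiplying by the net size $O(T/D)$, so the argument closes --- but with the Rudnick--Zaharescu counting, not dissociativity, doing the work.
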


\noindent Applying Theorem \ref{LacunaryApproximation} with $\mu$ as Lebesgue measure delivers the following result.

\begin{cor}
Let $n_1, n_2, \ldots$ be a lacunary sequence of positive integers, and let $\eps > 0$. Then, for almost all $\bet \in \bR$, if $\del \in \bR$ then
\eqref{LacunaryRate}
has infinitely many solutions 
$t \in \bN$.
\end{cor}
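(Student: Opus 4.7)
The plan is to derive this corollary as an immediate application of Theorem \ref{LacunaryApproximation} with $\mu$ taken to be Lebesgue measure on $[0,1]$, which certainly lies in $\cM([0,1])$.

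First I would verify the Fourier decay hypothesis \eqref{mudecay}. For $\mu$ equal to Lebesgue measure on $[0,1]$, a direct computation gives $\widehat{\mu}(0) = 1$ and $\widehat{\mu}(\xi) = 0$ for every nonzero integer $\xi$. Hence the estimate $\widehat{\mu}(\xi) \ll (1+|\xi|)^{-\tau}$ holds trivially for all $\tau > 0$ and $\xi \in \bZ$, so Theorem \ref{LacunaryApproximation} applies. This produces a full Lebesgue-measure subset $\cG \subseteq [0,1]$ such that, for every $\bet \in \cG$ and every $\del \in \bR$, the inequality \eqref{LacunaryRate} has infinitely many solutions $t \in \bN$.

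To promote the statement from $[0,1]$ to $\bR$, I would use the $1$-periodicity of $\| \cdot \|$. For any $\bet \in \bR$, write $\bet = m + \bet'$ with $m \in \bZ$ and $\bet' \in [0,1)$. Then for every $t \in \bN$ and every $\del \in \bR$ we have $n_t \bet - \del = (n_t \bet' - \del) + n_t m$ with $n_t m \in \bZ$, so $\| n_t \bet - \del \| = \| n_t \bet' - \del \|$. Since Lebesgue measure on $\bR$ is $\bZ$-translation invariant, the full-measure subset of $[0,1)$ obtained above lifts to a full-measure subset of $\bR$, which completes the deduction.

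There is essentially no obstacle here: the corollary is a bookkeeping exercise that repackages Theorem \ref{LacunaryApproximation} into a more transparent form. All the substance sits upstream in that theorem, whose proof (via the dispersion estimate and Fourier-analytic input advertised in the abstract) constitutes the real work.
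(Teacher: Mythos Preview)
Your proposal is correct and matches the paper's approach: the paper simply states that the corollary follows by applying Theorem \ref{LacunaryApproximation} with $\mu$ taken to be Lebesgue measure, and you have filled in the routine verifications (the trivial Fourier decay of Lebesgue measure and the periodicity argument to pass from $[0,1]$ to $\bR$) that the paper leaves implicit.
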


\noindent We will also be able to infer the following badly approximable analogue.

\begin{thm}
\label{LacunaryBad}
Let $n_1, n_2, \ldots$ be a lacunary sequence of positive integers, and let $\eps > 0$. Then there exists $\cG \subseteq \Bad$ with $\dimh(\cG) = 1$ such that if $\bet \in \cG$ and $\del \in \bR$ then
\eqref{LacunaryRate}
has infinitely many solutions 
$t \in \bN$. 
\end{thm}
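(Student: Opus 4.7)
The plan is to deduce Theorem \ref{LacunaryBad} from Theorem \ref{LacunaryApproximation} by feeding in a Kaufman-type Rajchman measure supported on $\Bad$. For each integer $M \geq 2$, set
\[
\Bad_M := \{\bet \in [0,1] : a_k(\bet) \leq M \text{ for all } k \geq 1\} \subseteq \Bad.
\]
The classical theorem of Kaufman (1980), together with its subsequent refinements by Queff\'elec--Ramar\'e and Jordan--Sahlsten, provides, for each sufficiently large $M$, a probability measure $\mu_M \in \cM(\Bad_M)$ satisfying
\[
|\widehat{\mu_M}(\xi)| \ll_M (1+|\xi|)^{-\tau_M}
\]
for some $\tau_M > 0$, and moreover $2 \tau_M$ can be arranged to approach $1$ as $M \to \infty$. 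In particular, each $\mu_M$ verifies the Fourier-decay hypothesis \eqref{mudecay}.

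Applying Theorem \ref{LacunaryApproximation} with $\mu = \mu_M$, the exceptional set
\[
\cE_M := \{\bet \in [0,1] : \text{there exists } \del \in \bR \text{ for which \eqref{LacunaryRate} has only finitely many solutions } t \in \bN\}
\]
satisfies $\mu_M(\cE_M) = 0$. Hence $\cG_M := \supp(\mu_M) \setminus \cE_M$ is a subset of $\Bad$ of full $\mu_M$-measure, and each $\bet \in \cG_M$ satisfies the conclusion of Theorem \ref{LacunaryBad}. Since the Fourier-decay bound implies finiteness of the $s$-energy of $\mu_M$ for every $s < 2 \tau_M$, a standard Frostman-type argument then gives $\dimh(\cG_M) \geq 2 \tau_M$. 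Setting
\[
\cG := \bigcup_{M \geq M_0} \cG_M \subseteq \Bad,
\]
we obtain $\dimh(\cG) = \sup_M \dimh(\cG_M) = 1$, as required.

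The main obstacle is the availability of Rajchman measures on $\Bad$ whose Fourier-decay exponent can be pushed arbitrarily close to $1/2$; fortunately, this is precisely the content of the Kaufman-type constructions on $\Bad_M$ cited above. Granting these external inputs, the argument is essentially formal: Theorem \ref{LacunaryBad} follows by the clean combination of Theorem \ref{LacunaryApproximation} with the energy/mass-distribution principle outlined above, and the bulk of the work is already contained in Theorem \ref{LacunaryApproximation} itself.
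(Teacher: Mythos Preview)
Your overall strategy --- apply Theorem \ref{LacunaryApproximation} to a Kaufman-type measure on $\Bad$, then use a Frostman/energy argument to extract the Hausdorff dimension --- matches the paper's. However, the specific way you deduce $\dimh(\cG)=1$ contains a genuine gap.

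You obtain the dimension lower bound $\dimh(\cG_M)\ge 2\tau_M$ \emph{from the Fourier decay alone}, via the energy criterion, and then claim that $2\tau_M\to 1$ as $M\to\infty$. This last assertion is not known: it is equivalent to saying that $\Bad$ (or $\bigcup_M \Bad_M$) has Fourier dimension $1$, i.e.\ is a Salem set. The results of Kaufman, Queff\'elec--Ramar\'e and Jordan--Sahlsten establish \emph{some} polynomial Fourier decay for suitable measures on $\Bad_M$, but none of them shows that the decay exponent can be pushed arbitrarily close to $1/2$. (Kaufman's original exponent is $7/10^4$, and later work does not bring it anywhere near $1/2$.) So as written, your argument would only yield $\dimh(\cG)\ge c$ for some small positive constant $c$, not $\dimh(\cG)=1$.

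The fix --- and this is what the paper does --- is to decouple the two roles of $\mu$. Kaufman's construction actually gives, for each $s<1$, a measure $\mu\in\cM(\Bad\cap[0,1])$ with two \emph{independent} features: (i) a Frostman bound $\mu(I)\ll_s \lambda(I)^s$ for all intervals $I$, and (ii) polynomial Fourier decay $\widehat{\mu}(\xi)\ll(1+|\xi|)^{-7/10^4}$. Property (ii), with its tiny exponent, is already enough to feed into Theorem \ref{LacunaryApproximation} and obtain $\mu(\cE)=0$; property (i), via the mass distribution principle, then gives $\dimh(\Bad\setminus\cE)\ge s$. Letting $s\uparrow 1$ finishes the proof. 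The point is that the dimension information comes from the Frostman exponent, not from the Fourier exponent, and Kaufman's measure has the former close to $1$ even though the latter is minuscule.
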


\noindent
Once more, we apply the short argument in \cite[Section 5]{CZ2021}, giving rise to the following result about inhomogeneously badly approximable numbers.

\begin{cor}
Let $n_1, n_2, \ldots$ be a lacunary sequence of positive integers, and let $\eps > 0$. Then there exists $\cG \subseteq \cB$ with $\dimh(\cG) = 2$ such that if $(\bet, \del) \in \cG$ then \eqref{LacunaryRate} has infinitely many solutions $t \in \bN$. 
\end{cor}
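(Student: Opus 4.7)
The plan is to deduce this corollary from Theorem \ref{LacunaryBad} and Tseng's theorem on inhomogeneous badly approximable numbers \cite{Tse2009}, via exactly the slicing argument used in \cite[Section 5]{CZ2021}; no new ingredient is needed beyond swapping in our new lacunary result as the input.

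First I would apply Theorem \ref{LacunaryBad} to obtain a set $\cG_0 \subseteq \Bad$ with $\dimh(\cG_0) = 1$ such that \eqref{LacunaryRate} has infinitely many solutions $t \in \bN$ for every $\bet \in \cG_0$ and every $\del \in \bR$. Next, for each $\bet \in \cG_0$, Tseng's theorem supplies a set $T_\bet \subseteq \bR$ with $\dimh(T_\bet) = 1$ such that $\bet \in \Bad(\del)$ whenever $\del \in T_\bet$. Setting
\[
\cG := \bigcup_{\bet \in \cG_0} \{\bet\} \times T_\bet \subseteq \bR^2,
\]
each point $(\bet,\del) \in \cG$ lies in $\cB$ and, by the uniformity in $\del$ built into Theorem \ref{LacunaryBad}, satisfies \eqref{LacunaryRate} for infinitely many $t \in \bN$.

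The upper bound $\dimh(\cG) \le 2$ is automatic. For the matching lower bound I would invoke Marstrand's slicing theorem \cite[Theorem 5.8]{Fal1986}: the projection of $\cG$ to its first coordinate contains $\cG_0$, which has dimension $1$, and every vertical fibre $\{\bet\} \times T_\bet$ has dimension $1$, so these combine to force $\dimh(\cG) \ge 1 + 1 = 2$.

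The only non-cosmetic point is the measurability condition required by Marstrand's slicing theorem, since Tseng's construction of $T_\bet$ is carried out $\bet$ by $\bet$. As in \cite[Section 5]{CZ2021}, this is handled by replacing $\cG$ with a Borel subset on which both the fibre dimensions and the projection dimension are preserved; the argument there transfers verbatim. I expect this measurability bookkeeping to be the only mildly technical step, and nothing in it is sensitive to whether the driving inequality is the multiplicative one of \eqref{rate} or the lacunary one of \eqref{LacunaryRate}.
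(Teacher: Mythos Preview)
Your proposal is correct and follows exactly the approach the paper indicates: the paper simply says to apply the short argument in \cite[Section 5]{CZ2021}, which combines Marstrand's slicing theorem with Tseng's result on inhomogeneous badly approximable numbers, using Theorem~\ref{LacunaryBad} as the input in place of the multiplicative result. Your write-up spells out precisely those steps, including the measurability caveat handled there.
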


\bigskip

We stress that our results are all uniform in the shift $\del$, which limits the set of available techniques. More is known when $\bet$ is allowed to depend on $\del$. We state below a special case of \cite[Theorem 1.8]{CT}.

\begin{thm} [Chow--Technau, 2021+]
Let $\alp \in \bR$ be irrational and non-Liouville, and let $\gam, \del \in \bR$. Let $\psi: \bN \to (0,\infty)$ be monotonic with
\[
\sum_{n=1}^\infty \psi(n) \log n = \infty.
\]
Then, for almost all $\bet \in \bR$, there exist infinitely many $n \in \bN$ such that
\[
\| n \alp - \gam \| \cdot \| n \bet - \del \| < \psi(n).
\]
\label{PreviousThm}
\end{thm}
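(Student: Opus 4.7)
The plan is to establish this via a divergence Borel--Cantelli argument in the $\bet$-variable, after a dyadic decomposition in the $\alp$-direction. For $k \ge 1$ put
\[
A_k = \{ n \in \bN : 2^{-k-1} < \| n \alp - \gam \| \le 2^{-k} \}.
\]
Because $\alp$ is irrational and non-Liouville, the Erd\H{o}s--Tur\'an inequality combined with Weyl-type bounds on the exponential sums $\sum_{n \le N} e(mn\alp)$ yields a quantitative equidistribution statement of the form
\[
\#(A_k \cap [1,N]) = 2^{-k} N + O(N^{1-\eta})
\]
with some $\eta > 0$, valid for a range of $k$ comparable to $\log N$. For $n \in A_k$, set
\[
E_n = \{ \bet \in [0,1] : \| n \bet - \del \| < 2^{k+2} \psi(n) \},
\]
so that $\bet \in E_n$ forces the desired inequality, and $|E_n| \asymp \min(1,\, 2^k \psi(n))$.

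Summing dyadically,
\[
\sum_{n \le N} |E_n| \asymp \sum_{k} 2^{-k} N \cdot 2^{k} \psi(N) \asymp \sum_{n \le N} \psi(n) \log n,
\]
where the extra logarithmic factor is precisely the gain arising from the $\asymp \log N$ dyadic scales of $\| n \alp - \gam \|$. By hypothesis this diverges as $N \to \infty$; this is the first half of the Borel--Cantelli input.

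The decisive step is to verify quasi-independence, namely
\[
\sum_{1 \le m,n \le N} |E_m \cap E_n| \ll \biggl( \sum_{n \le N} |E_n| \biggr)^{\!2},
\]
after which the Chung--Erd\H{o}s divergence Borel--Cantelli lemma, together with a zero--one argument, furnishes the conclusion for almost every $\bet$. For a pair $(m,n)$ with $m \in A_k$ and $n \in A_{k'}$, one writes $|E_m \cap E_n|$ via its Fourier expansion; integrating out $\bet$ reduces the joint event to a sum of correlations controlled by arithmetic quantities depending on $\gcd(m,n)$ and on the rational approximations to $\alp$.

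The principal obstacle is the uniform estimation of these correlations across dyadic scales. Here the non-Liouville hypothesis on $\alp$ enters most critically: it ensures that near-resonances of the form $\| (m-n)\alp \| \ll N^{-C}$ are sparse enough that the diagonal-like contribution does not overwhelm the product $|E_m||E_n|$. One naturally splits the sum into a \emph{generic} part (where Weyl bounds give a power saving) and a \emph{resonant} part (handled via a GCD or continued-fraction argument exploiting that the denominators $q_k(\alp)$ grow only polynomially in $k$). Balancing these two regimes, and reconciling the loss in the resonant regime with the dyadic logarithmic gain already built into $\sum |E_n|$, is the technical heart of the proof.
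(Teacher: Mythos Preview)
This theorem is not proved in the present paper at all: it is quoted from \cite{CT} as background, and the paper only remarks that its proof ``was proved using the Bohr set machinery developed in \cite{Cho2018, CT1}''. So there is no proof here to compare against line by line; what can be compared is the \emph{method}.

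Your strategy --- dyadic decomposition of $\|n\alp-\gam\|$, first moment $\sum |E_n|$, then quasi-independence and divergence Borel--Cantelli in $\bet$ --- is a natural guess but is \emph{not} the route taken in \cite{CT}. There the argument works inside Bohr sets $B=\{n:\|n\alp\|\le\rho\}$ (and inhomogeneous translates thereof). The point is that such $B$ is essentially a long arithmetic progression, so the sets $\{\bet:\|n\bet-\del\|<\Psi\}$ for $n\in B$ behave like a one-dimensional Khintchine-type system with a clean overlap structure; the $\log n$ gain then comes from summing over the hierarchy of scales $\rho$. This structural reduction is exactly what your sketch is missing.

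Concretely, the gap in your proposal is the quasi-independence step. The overlap $|E_m\cap E_n|$ is governed by $\gcd(m,n)$ and the radii $r_m,r_n$, and the radii in turn depend on which dyadic classes $A_{k},A_{k'}$ contain $m,n$. Controlling $\sum_{m,n}|E_m\cap E_n|$ therefore requires joint distributional information about the pair $(\|m\alp-\gam\|,\|n\alp-\gam\|)$ \emph{together with} the GCD structure of $(m,n)$, and these two pieces of data are entangled. Your appeal to ``near-resonances $\|(m-n)\alp\|\ll N^{-C}$'' does not address this: that quantity governs the correlation between the \emph{scales} $k,k'$, not the overlap of the $\bet$-arcs, and conversely Weyl bounds in $\bet$ see nothing of $\alp$. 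Separately, the Erd\H{o}s--Tur\'an input $\#(A_k\cap[1,N])=2^{-k}N+O(N^{1-\eta})$ is only nontrivial for $k\ll\eta\log N$, so for $\alp$ close to Liouville (small $\eta$) you cannot access the full range of $\asymp\log N$ dyadic scales that your first-moment heuristic needs; the Bohr-set approach sidesteps this by never invoking discrepancy in this way.
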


\noindent
For example, taking $\psi(n) = 1/(n \log n \cdot \log \log n)$ gives
\[
\liminf_{n \to \infty} n (\log n) \| n \alp
- \gam \| \cdot \| n \bet - \del \| = 0
\]
in this context. Theorem \ref{PreviousThm} was proved using the Bohr set machinery developed in \cite{Cho2018, CT1}, and settled a conjecture of Beresnevich, Haynes and Velani~\cite{BHV2020}. 

The lacunary theory is also well understood when the shift is fixed. We state a special case of \cite[Theorem 1]{PVZZ2022}.

\begin{thm} [Pollington--Velani--Zafeiropoulos--Zorin, 2022]
\label{PVZZthm}
Let $\del \in \bR$, let $n_1, n_2, \ldots$ be a lacunary sequence of positive integers, and let $\mu$ be as in Theorem \ref{MainThm}. Let $\psi: \bN \to [0,1]$, and denote by $\cW(\psi)$ the set of $\bet \in [0,1]$ such that 
\[
\| n_t \bet - \del \| \le \psi(t)
\]
has infinitely many solutions $t \in \bN$. Then
\[
\mu(\cW(\psi)) =
\begin{cases}
0, &\text{if } 
\displaystyle
\sum_{t=1}^\infty \psi(t) 
< \infty 
\\
\\
1, &\text{if } 
\displaystyle
\sum_{t=1}^\infty \psi(t) 
= \infty.
\end{cases}
\]
\end{thm}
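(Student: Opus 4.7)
The convergence half follows from Borel--Cantelli together with a Fourier-analytic mass estimate, while the divergence half requires a quasi-independence (second moment) argument supplemented by a zero--one type upgrade. Write $E_t = \{\bet \in [0,1] : \|n_t \bet - \del\| \le \psi(t)\}$, so $\cW(\psi) = \limsup_t E_t$.

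\emph{Mass estimate.} Let $F_t$ denote the $1$-periodic indicator of the symmetric $\psi(t)$-neighbourhood of $0$, Fourier expanded as $F_t(x) = 2\psi(t) + \sum_{k \neq 0} c_{k,t} e(kx)$ with $|c_{k,t}| \ll \min(\psi(t), 1/|k|)$. Since $\mathbf{1}_{E_t}(\bet) = F_t(n_t \bet - \del)$, integrating against $\mu$ and using \eqref{mudecay} gives
\[
\mu(E_t) = 2\psi(t) + O\!\left( \sum_{k \neq 0} \frac{(1+|k|n_t)^{-\tau}}{|k|} \right) = 2 \psi(t) + O(n_t^{-\tau}),
\]
and the error is summable by lacunarity. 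Hence in the convergence regime $\sum_t \mu(E_t) < \infty$, and Borel--Cantelli yields $\mu(\cW(\psi)) = 0$.

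\emph{Divergence case.} For $s < t$, expanding $F_s(n_s \bet - \del) F_t(n_t \bet - \del)$ and integrating produces
\[
\mu(E_s \cap E_t) = 4\psi(s)\psi(t) + \sum_{(k,l) \neq (0,0)} c_{k,s} c_{l,t}\, e(-(k+l)\del)\, \widehat{\mu}(k n_s + l n_t).
\]
Lacunarity of $(n_t)$ forces $|kn_s + ln_t| \gg n_s \max(|k|,|l|)$ whenever $|l| \geq 1$, while the terms with $l = 0$, $k \neq 0$ contribute $|kn_s|^{-\tau}$; combining this with the bound on $|c_{k,t}|$ gives an error of $O(n_s^{-\tau})$, which is summable. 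Consequently
\[
\sum_{s \neq t \leq N} \mu(E_s \cap E_t) \leq \Bigl( \sum_{t \leq N} \mu(E_t) \Bigr)^2 + O(1),
\]
and the Chung--Erd\H{o}s (divergence Borel--Cantelli) inequality delivers $\mu(\cW(\psi)) > 0$.

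\emph{Upgrade to full measure.} The variance estimate is uniform over tails: for every $T \geq 1$ the same argument applied to $(n_t)_{t \geq T}$ yields $\mu(\limsup_{t \geq T} E_t) \geq c > 0$ with $c$ independent of $T$. To promote this to $\mu(\cW(\psi)) = 1$, I would establish a zero--one dichotomy driven by the Fourier decay of $\mu$: a Rajchman-style mixing statement, combined with the lacunarity of $(n_t)$ and the translation-invariance inherent in the shift $\del$, shows that the only permissible values of $\mu(\cW(\psi))$ are $0$ and $1$. The main obstacle is precisely this final step; the positive-measure conclusion is a standard consequence of Fourier decay and lacunarity, but pushing it to full measure requires exploiting the structural rigidity of Rajchman measures through a more delicate analysis---for instance, by showing that the normalised counting function $\sum_{t \leq N} \mathbf{1}_{E_t}/\sum_{t \leq N} \mu(E_t)$ converges to $1$ in $L^2(\mu)$, upgrading quasi-independence to an asymptotic law of large numbers.
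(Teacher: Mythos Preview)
The paper does not prove this theorem. It is stated in the introduction as a special case of \cite[Theorem~1]{PVZZ2022} and is used only as context to benchmark Theorem~\ref{LacunaryApproximation}; no argument for it appears anywhere in the paper. So there is nothing to compare your proposal against here.

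That said, a few comments on your sketch. The convergence half is fine. In the divergence half, your pointwise lower bound $|kn_s + l n_t| \gg n_s \max(|k|,|l|)$ for all $l \ne 0$ is not literally true (take $k = -l n_t/n_s$ when this is an integer, or nearby), so the error analysis needs to be carried out by summing over $(k,l)$ with the actual bound $|kn_s + l n_t|^{-\tau}$ and handling near-cancellations more carefully; this is where the lacunarity is genuinely used, and it does go through, but not via the inequality you wrote. More seriously, you correctly flag that the upgrade from positive to full $\mu$-measure is the crux and you do not actually prove it: invoking a ``Rajchman-style mixing statement'' or ``structural rigidity of Rajchman measures'' is not a proof, and a zero--one law for general Rajchman measures in this setting is nontrivial. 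The $L^2$ counting route you mention at the end is the right idea and is essentially what \cite{PVZZ2022} does, but it requires a sharper variance bound than mere quasi-independence (one needs the off-diagonal error to be $o$ of the square of the mean, not just $O(1)$), and that in turn needs a more careful treatment of the Fourier tails than your sketch provides.
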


\noindent Theorem \ref{LacunaryApproximation} is quantitatively close to the threshold in Theorem \ref{PVZZthm}, and it comes with the `shift-uniformity' attribute: the set of allowed $\bet$ does not depend on $\del$ in Theorem \ref{LacunaryApproximation}
(unlike Theorem \ref{PVZZthm}, which does not have this attribute).

\bigskip

The analogous inhomogeneous multiplicative approximation problem with uniformity in both $\gam$ and $\del$ has also been considered. Shapira \cite{Sha2011} was the first to obtain such a result.
Subsequently, Gorodnik and Vishe~\cite{GV2016} demonstrated the following quantitative refinement.

\begin{thm} [Gorodnik--Vishe, 2016]
There exists $\kap > 0$ such that, for almost all pairs $(\alp, \bet) \in \bR^2$,
\[
\liminf_{|n| \to \infty} |n| (\log \log \log \log \log |n|)^\kap
\| n \alp - \gam \| \cdot 
\| n \bet - \del \| = 0
\]
for any $\gam, \del \in \bR$.
\end{thm}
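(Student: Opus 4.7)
The plan is to translate the problem into a dynamical statement on a homogeneous space and then use effective equidistribution. By a version of the Dani correspondence adapted to the inhomogeneous setting, the pair $(\alp,\bet)$ together with shifts $(\gam,\del)$ may be encoded as a point $g\Gamma$ in the affine space $X = (SL_3(\bR) \ltimes \bR^3)/(SL_3(\bZ) \ltimes \bZ^3)$, and the Diophantine inequality
\[
|n| \, \| n \alp - \gam \| \cdot \| n \bet - \del \| < \psi(|n|)
\]
translates to excursions of the trajectory $\{a_t g \Gamma\}$ under the diagonal flow $a_t = \mathrm{diag}(e^{2t}, e^{-t}, e^{-t})$ into a shrinking family of cusp neighbourhoods whose measures decay like $\psi$. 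The central computation here is to identify the correct `box' target in $X$ whose return counts match the Diophantine counts, with the inhomogeneous shifts recorded in the $\bR^3$-translation component of $g$.

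The next step is to establish an effective Birkhoff-type theorem for this action. The key ingredient is quantitative mixing of the diagonal flow on $L^2(X)$, which follows from decay of matrix coefficients for unitary representations of the group $SL_3(\bR) \ltimes \bR^3$. A Gallagher-style variance estimate then shows that the ergodic averages along $\{a_t g\Gamma\}_{t \le T}$ approximate spatial averages with a power-saving error in $T$, and crucially, this estimate must hold uniformly as the characters and translation data corresponding to varying $(\gam,\del)$ change.

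I would then run a dynamical Borel--Cantelli argument. Choosing a sequence of shrinking cusp targets $B_N \subset X$ whose measures are barely summable, the effective mixing gives quasi-independence of the events $\{a_t g\Gamma \in B_N\}$, from which infinitely many returns follow for Lebesgue-almost every base point $g$, hence for almost every $(\alp, \bet)$. Optimising the thickness of the targets against the effective mixing rate produces the iterated-logarithmic function on the right-hand side: each truncation/level-set-dyadic step in a Borel--Cantelli scheme of this type typically costs one logarithm, so the quintuple iteration is a byproduct of the limited quantitative strength that is currently available in the effective mixing input.

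The principal obstacle is obtaining effective mixing with full uniformity over the shifts $(\gam,\del)$. The decay of matrix coefficients must come with explicit Sobolev exponents that do not deteriorate across the infinite family of characters of $\bR^3$ indexed by $(\gam,\del)$. Any slippage here both prevents one from taking $\kap$ too large and introduces additional iterated logarithms, so tight representation-theoretic control is essential. This is the technical heart of the Gorodnik--Vishe argument, where an effective spectral gap for the semidirect product interfaces with the geometry of the cusps of $SL_3(\bR)/SL_3(\bZ)$.
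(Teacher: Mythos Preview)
This theorem is not proved in the paper; it is stated as background and attributed to Gorodnik and Vishe~\cite{GV2016}. There is therefore no ``paper's own proof'' to compare your proposal against. The paper merely quotes the result to contrast its quantitative strength with the authors' own theorems, emphasising that uniformity in \emph{both} shifts $\gam$ and $\del$ is a harder problem that their dispersion method does not address.

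That said, your sketch does accurately reflect the strategy of the original Gorodnik--Vishe argument: the Dani-type correspondence encoding the inhomogeneous problem on $X = (SL_3(\bR)\ltimes\bR^3)/(SL_3(\bZ)\ltimes\bZ^3)$, effective equidistribution/mixing of the diagonal flow, and a quantitative Borel--Cantelli scheme whose successive truncations produce the iterated logarithms. If the task was to reconstruct a proof of the cited theorem, your outline is on the right track, though it remains a high-level plan rather than a proof; the genuine technical work lies in the uniform effective mixing estimates you mention, and your proposal does not carry these out.
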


\noindent Our results are quantitatively much stronger than this. However, uniformity in both $\gam$ and $\del$ presents a further challenge that our method seems unable to overcome. 

\section{Lacunary diophantine inequalities}

Various authors counted solutions to certain lacunary diophantine equations. We state, for imminent use, a variant of 
a lemma of Rudnick and Zaharescu, see \cite[Lemma 2.2]{RZ2002}, in which the dependence on the number of variables is made explicit.

\begin{lem}
\label{RZ1}
Let $A_1 \ge A_2 \ge \dots \ge A_s$ be positive integers.
Then, for any $b \in \bZ$ and any $Y \in \bN$, there are at most 
$8^s Y^{s-1}$ 
integer vectors $\by \in [-Y,Y]^s$ such that
\[
|A_1 y_1 + \cdots + A_s y_s + b| \le A_1.
\]
\end{lem}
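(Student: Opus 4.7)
The plan is to isolate the variable whose coefficient is the largest, namely $y_1$, and note that the constraint $|A_1 y_1 + A_2 y_2 + \cdots + A_s y_s + b| \le A_1$ is extremely restrictive in $y_1$ once the remaining variables are fixed. Specifically, for any choice of $(y_2, \ldots, y_s) \in [-Y, Y]^{s-1} \cap \bZ^{s-1}$, write $c = A_2 y_2 + \cdots + A_s y_s + b \in \bZ$; then the admissible $y_1$ are exactly those integers for which $A_1 y_1$ lies in the interval $[-c - A_1, -c + A_1]$, which has length $2A_1$.

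Since the integer multiples of $A_1$ are spaced by $A_1$, an interval of length $2A_1$ contains at most $3$ of them. Consequently, for each fixed tuple $(y_2, \ldots, y_s)$, there are at most three admissible values of $y_1$. This is the only place where the hypothesis $A_1 \ge A_j$ is used: it guarantees that $y_1$ is the variable whose coefficient dominates the right-hand side $A_1$ of the inequality, so that the integer-point count inside the relevant interval is bounded by a constant independent of the $A_j$.

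Summing over the at most $(2Y+1)^{s-1}$ tuples $(y_2, \ldots, y_s)$, the total number of admissible integer vectors $\by$ is at most $3(2Y+1)^{s-1}$. Using $2Y + 1 \le 3Y$ for $Y \ge 1$, this is bounded by $3 \cdot 3^{s-1} Y^{s-1} = 3^s Y^{s-1} \le 8^s Y^{s-1}$, which gives the lemma (with considerable slack). The argument is essentially a one-line counting argument; there is no genuine obstacle, beyond observing that singling out the coordinate with the largest coefficient produces the uniform bound of $3$ on the number of $y_1$ per fixed $(y_2, \ldots, y_s)$.
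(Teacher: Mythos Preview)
Your proof is correct and in fact more elementary than the paper's. The paper normalises by $A_1$ to obtain the region
\[
\Omega = \{ \by \in [-Y,Y]^s : |y_1 + \lam_2 y_2 + \cdots + \lam_s y_s + c| \le 1 \}
\]
with $\lam_j \in (0,1]$, and then invokes an explicit Lipschitz-type lattice-point estimate (Barroero--Widmer \cite[Theorem~1.1]{BW2014}) to bound $\#(\Omega \cap \bZ^s)$ by $2^s Y^{s-1} + \sum_{j=0}^{s-1} \binom{s}{j}(2Y)^j \le 8^s Y^{s-1}$. You instead freeze $(y_2,\ldots,y_s)$ and observe directly that at most three multiples of $A_1$ lie in an interval of length $2A_1$, which avoids the appeal to \cite{BW2014} and even yields the sharper constant $3^s$. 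The paper's route has the advantage of being a systematic application of a general counting principle; yours is shorter and self-contained. One minor expository point: your argument does not actually use the ordering $A_1 \ge A_j$ for $j \ge 2$ at all --- what matters is only that the coefficient of $y_1$ coincides with the bound $A_1$ on the right-hand side.
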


\begin{proof} This is a straightforward adaptation of the proof of \cite[Lemma 2.1]{RZ2002}. 
The only change is to use an explicit version of the Lipschitz principle.
The points lie in the region
\[
\Omega = \{ \by \in [-Y,Y]^s:
|y_1 + \lam_2 y_2 + \cdots
+ \lam_s y_s + c| \le 1 \},
\]
for some $\lam_2,\ldots,\lam_s \in (0,1]$ and some $c \in \bR$. By \cite[Theorem 1.1]{BW2014}, the number of integer points in $\Omega$ is at most
\[
2^s Y^{s-1}
+ \sum_{j=0}^{s-1} \binom{s}{j} (2Y)^j
\le 8^s Y^{s-1},
\]
as claimed.
\end{proof}

By adapting the proof of \cite[Lemma 2.3]{RZ2002}, we establish the following estimate for the number of solutions to a certain diophantine inequality. The result is sharp up to logarithmic factors, owing to possible diagonal solutions. Unlike \cite[Lemma 2.3]{RZ2002}, it is uniform in the number of variables. It is also more general, in that it deals with inequalities and the ranges are allowed to differ. Our argument is slightly more refined, so we present it in full.

\begin{lem}
\label{GeneralRZ}
Let $a(1), a(2), \ldots$ be a lacunary sequence of positive integers, and let $r \in (1,2]$ be such that
\[
a(n+1) > r a(n)
\qquad (n \in \bN).
\]
Let $s \in \bN$, let $Z \ge Y \ge 2$, and let $K \in [0, a(Z)/8]$. 
Assume that $4s \le Z$.
Then there exists a constant $ R=R(r)>1$ so that 
are at most
\[
s! R^s \max \{
(YZ \log Z)^{s/2},
Y^{s-1} Z (\log Z)^{s-1} \}
\]
integer solutions to
\[
|y_1 a(z_1) + \cdots 
+ y_s a(z_s)| \le K
\]
such that
\[
0 < |y_j| \le Y,
\quad
Z < z_j \le 2Z 
\qquad
(1 \le j \le s).
\]
\end{lem}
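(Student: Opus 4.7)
The plan is to adapt the Rudnick--Zaharescu clustering argument of \cite[Lemma 2.3]{RZ2002}: group the indices by equality of $z_j$ and exploit lacunarity of $a(\cdot)$ to control both the coefficient tuples and the allowed $z$-configurations.

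For each solution $(\by, \bz)$, let $w_1 < w_2 < \cdots < w_m$ denote the distinct values attained by $z_1, \ldots, z_s$, with multiplicities $\mu_1, \ldots, \mu_m$ summing to $s$, and set $T_i := \sum_{j : z_j = w_i} y_j$. Then $|T_i| \le \mu_i Y$ and $|\sum_i T_i a(w_i)| \le K$. I would decompose the count for each multiplicity profile $(\mu_1, \ldots, \mu_m)$ as a product of three factors: the number of ways to assign $[s]$ to blocks, namely $s!/\prod_i \mu_i!$; the number of valid tuples $(w_i, T_i)_{i=1}^m$ meeting the displayed constraint; and, for each $T_i$, the number of compositions of $T_i$ into $\mu_i$ nonzero parts in $[-Y,Y]$, which is at most $(2Y+1)^{\mu_i - 1}$.

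The heart of the estimate is the middle factor. Since $K \le a(Z)/8$ and $w_1 > Z$ give $a(w_1) > 8K$, the ``smallest-first'' lacunary step forces $T_1$ to be uniquely determined by $(T_2, \ldots, T_m, w_1, \ldots, w_m)$: the interval in which $T_1 a(w_1)$ must lie has length $2K < a(w_1)/4$, so contains at most one integer multiple of $a(w_1)$. Running the same argument at the top, $|T_m a(w_m)| \le K + \sum_{i < m} \mu_i Y \, a(w_i) \ll sY \, a(w_{m-1})$, so that $|T_m| \ll sY r^{-(w_m - w_{m-1})}$. When $T_m \neq 0$, this forces the gap $w_m - w_{m-1}$ to satisfy $w_m - w_{m-1} \ll \log_r(sY) \ll \log Z$; iterating this bound (splitting the diagonal case $T_m = 0$, which recurses on $m-1$ clusters with the same $K$, from the heavy case $T_m \neq 0$, which saves a geometric factor in the top gap) constrains each consecutive gap $w_{i+1} - w_i$ to $O(\log Z)$. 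Combining this with the convergent geometric sums $\sum_D D^{m-2} r^{-D} \ll_r (m-2)!$ produces at most $Z (C \log Z)^{m-1}$ choices of $\bw$ and $O(Y)^{m-1}$ choices of $(T_2, \ldots, T_m)$ per $\bw$.

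Summing the three factors over all multiplicity profiles, the dominant contributions come from the all-distinct profile $m = s$ (producing $s! R^s Y^{s-1} Z (\log Z)^{s-1}$, the second term in the max) and the paired profile $\mu_i \equiv 2$ (producing $s! R^s (YZ \log Z)^{s/2}$, the first term, which dominates when $Y$ is small relative to $Z/\log Z$); intermediate profiles are absorbed by taking the maximum. The main obstacle is the iterated gap bound: fixing $T_m$ shifts the right-hand side of the equation for $(T_1, \ldots, T_{m-1})$ to a value of size up to $O(a(w_m))$, so the largest-first lacunary control does not propagate cleanly to $T_{m-1}$. The recursive split between diagonal and heavy cases, together with careful bookkeeping of the geometric series, is what yields the polynomial-in-$\log Z$ factors while keeping all multiplicative constants dependent on $r$ alone and absorbable into $R^s$.
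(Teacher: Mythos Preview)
Your approach differs from the paper's, and the ``iterated gap bound'' step has a genuine gap.

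You group indices by \emph{exact} equality of $z_j$, obtaining distinct $w_1<\cdots<w_m$ with aggregated coefficients $T_i$, and then assert that in the heavy case $T_m\ne 0$ every consecutive gap $w_{i+1}-w_i$ is $O(\log Z)$. Only the top gap $w_m-w_{m-1}$ is controlled this way: once $(T_m,w_m)$ are fixed, the residual equation for $(T_1,\ldots,T_{m-1})$ carries a shift $b=T_m a(w_m)$ of size up to $sY\,a(w_{m-1})$, and the argument does not propagate to $w_{m-1}-w_{m-2}$. Concretely, take $a(n)=2^n$, $s=4$, $(y_1,y_2,y_3,y_4)=(2,-1,2,-1)$ and $(z_1,z_2,z_3,z_4)=(w,w+1,w',w'+1)$ with $Z<w$ and $w'+1\le 2Z$. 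The sum vanishes identically, all $\mu_i=1$ so every $T_i\ne 0$, yet the middle gap $w'-(w+1)$ is unrestricted. Your ``diagonal'' branch $T_m=0$ does not catch this, since it requires $\mu_m\ge 2$; more generally your dichotomy misses the case where a partial sum over \emph{several} consecutive top $w_i$'s is small. After the reduction to $(T_i,w_i)$ you are essentially facing the original problem again (now with $m$ in place of $s$ and $\mu_i Y$ in place of $Y$), so nothing has been gained.

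The paper sidesteps this by clustering not by equality but by \emph{proximity}: after ordering $z_1\ge\cdots\ge z_s$, it partitions the indices into consecutive blocks $B_1,\ldots,B_\ell$ within each of which the $z$-values lie within $2(\log Z)/\log r$ of one another, so that between adjacent blocks the $a$-values drop by a factor exceeding $Z^2$. This separation makes the contribution of all later blocks smaller than $a(z_{j_k})$, so Lemma~\ref{RZ1} applies to each block in turn with the already-determined part absorbed into the arbitrary shift $b$; singleton lower blocks then have their $z$-value pinned to $O_r(1)$ possibilities. The triangle inequality forces $|B_1|\ge 2$, and in the counterexample above the paper's partition is $B_1=\{1,2\}$, $B_2=\{3,4\}$, landing in the $t=s/2$ regime that generates the $(YZ\log Z)^{s/2}$ term. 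Clustering by proximity rather than equality is the missing idea.
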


\begin{proof}
It suffices to count solutions with 
\[
z_1 \ge \cdots \ge z_s.
\]
To each solution we associate a partition $(B_1, \ldots, B_\ell)$ of the indices as follows. 
The set $B_1$ comprises those $j$ such that $z_j \ge z_1 - \frac{2 \log Z}{\log r}$. 
With $j_2 = \max B_1 + 1$, the set $B_2$ comprises those $j \ge j_2$ 
such that $z_j \ge z_{j_2} 
- \frac{2 \log Z}{\log r}$, and so on.
We fix such a partition, giving
\[
z_{j_k} <
z_{j_{k-1}} - 
\frac{2\log Z}{\log r}
\le 
z_{j_k - 1}
\qquad
(2 \le k \le \ell)
\]
and hence
\[
a(z_{j_k}) < \frac{a(z_{j_{k-1}})}{Z^2}
\qquad
(2 \le k \le \ell).
\]
Next, we count $(\by,\bz)$ corresponding to the partition $(B_1,\ldots,B_\ell)$.

We start by choosing $z_j$ for $j \in B_k$ whenever $|B_k| \ge 2$. If there are $t$ such values of $k$, then there are at most 
\[
Z^t\left(
\frac{3 \log Z}{\log r}
\right)^{s-\ell}
\]
choices here. Next, we choose $y_j$ whenever $B_k = \{ z_j \}$. There are at most $(2Y)^{\ell-t}$ choices here.

By the triangle inequality, we have 
$|B_1| \ge 2$ and
\[
|y_1 a(z_1) + \cdots + 
y_{j_2 - 1} 
a(z_{j_2 - 1})| 
< a(z_1).
\]
This has at most 
$8^{j_2 - 1} Y^{j_2-2}$ 
solutions 
$(y_1,\ldots,y_{j_2-1})$, 
by Lemma \ref{RZ1}.

If $|B_2| \ge 2$ then we have
\[
|y_{j_2} a(z_{j_2}) + \cdots 
+ y_{j_3 - 1} a(z_{j_3 - 1}) + b|
< a(z_2),
\]
for some $b \in \bZ$ that has been determined. This has 
at most 
$
8^{j_3 - j_2}
Y^{j_3 - j_2 - 1}
$
solutions $(y_{j_2},\ldots,
y_{j_3-1})$, by Lemma~\ref{RZ1}. 
Otherwise, $|B_2| = 1$ which implies
\[
|y_2 a(z_2) + b| 
< \frac{a(z_2)}{2}.
\]
If $y_2 > 0$ then
\[
\frac{b}{2y_2} <
a(z_2) 
< \frac{2b}{y_2},
\]
which has at most $5/\log r$ solutions $z_2$. 
Similarly, if $y_2 < 0$ then there are $5/\log r$ solutions $z_2$.

Repeating the argument, 
our total count for this partition is
\[
(240/\log r)^s Y^{s-t} Z^t (\log Z)^{s-\ell}.
\]
Because $\ell \ge t$, this quantity 
is at most
\[
(240/\log r)^s Z^t
(Y \log Z)^{s-t}.
\]
As $1 \le t \le s/2$, we obtain
\[
(240/\log r)^{s} 
\max \{
(YZ \log Z)^{s/2},
Y^{s-1} Z (\log Z)^{s-1} \}.
\]
Finally, there are at most $2^s$ partitions, 
and at most $s!$ permutations of the variables.
\end{proof}

\section{Dispersion}

We require a quantitative description of how dense a finite set of points is. Whereas previous authors have used discrepancy, we use dispersion.
Given a set 
$\mathcal{A} \subset [0,1)$ of $T \ge 2$
elements 
\[
0 \leq a_{1} < a_{2} <
\dots < a_{T} <1,
\]
its \emph{dispersion} 
(the largest gap in the nearest 
neighbour spacing statistic) is
\[
\mathrm{disp}(\mathcal{A}) =
\max_{t\leq T} (a_{t+1}-a_{t})
\quad\mathrm{where}\quad 
a_{T+1}=a_{1}+1,
\]
cf. \cite[Definition 1.15]{DT1997}.
If the elements in $\cA$ 
are chosen independently and uniformly
in $[0,1)$ at random, then 
$\mathrm{disp}(\mathcal{A})
\asymp
\frac{\log T}{T}$
with high probability --- 
see David and Nagaraja's monograph on order statistics \cite[Section 6.4]{DN03}. The following result asserts that this is approximately true when $\mathcal{A}$ is a segment of
a generically-dilated lacunary sequence. 

\begin{thm}
\label{thm: dispersion}
Let $n_1,n_2,\ldots$ be a lacunary sequence of positive integers, and let $r \in (1,2]$ be such that 
\[
n_{t+1} > r n_t
\qquad (t \in \bN).
\]
Let $\eps > 0$, and let $\mu, \tau$ be as in Theorem \ref{MainThm}. Then 
\begin{equation}
\label{DispersionBound}
\mathrm{disp}(\{ 
n_t \bet
\,\mathrm{mod}\,1:
T < t \le 2T\})
\ll_{\bet, r, \eps, \tau} 
\frac{(\log T)^{3 + \eps}}{T},
\end{equation}
for $\mu$-almost all $\bet \in [0,1]$.
\end{thm}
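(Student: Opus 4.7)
My plan is a Fourier-analytic high-moment computation combined with Lemma~\ref{GeneralRZ} to count lacunary linear forms, followed by Chebyshev and Borel--Cantelli. It suffices to work along the dyadic subsequence $T \in \{2^M : M \ge 1\}$, since monotonicity then gives the bound for general $T$ up to a constant loss. Set $\delta = (\log T)^{3+\eps}/T$ and cover $[0,1]$ by $\lceil 1/\delta \rceil$ arcs $I$ of length $\delta$; it then suffices to show that, for $\mu$-a.e.\ $\bet$ and all sufficiently large $M$, every such arc $I$ contains a point of $\{n_t \bet \bmod 1: T<t\le 2T\}$.

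To detect this, fix a nonnegative smooth bump $\phi$ on $\bR$ supported in $[-1,1]$ with $\int \phi = 1$, let $c_I$ denote the centre of $I$, and consider the $1$-periodic mollifier
\[
\phi_I(\theta) = \delta^{-1}\sum_{m \in \bZ} \phi\bigl((\theta - c_I - m)/\delta\bigr), \qquad S_I(\bet) = \sum_{T < t \le 2T} \phi_I(n_t \bet).
\]
Showing that $S_I(\bet) > 0$ for every $I$ (which implies the claimed dispersion bound) will be a consequence of suitable control on the moments
\[
M_k := \int_{[0,1]} |S_I(\bet) - \bE S_I|^{2k} \, \d\mu(\bet),
\]
together with the lower bound $\bE S_I \gg T$, which follows from the Fourier expansion of $\phi_I$ combined with~\eqref{mudecay}.

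The heart of the argument is the bound for $M_k$. Expanding via the Fourier series of $\phi_I$ produces a sum over $2k$-tuples $((\xi_i,t_i))$ of $\prod_i \widehat{\phi}(\delta \xi_i)$, times a phase, times $\widehat{\mu}\bigl(\sum_i \epsilon_i \xi_i n_{t_i}\bigr)$, where $\epsilon_i = +1$ for $i \le k$ and $\epsilon_i = -1$ for $i > k$. The rapid decay of $\widehat\phi$ restricts effectively to $|\xi_i| \ll \delta^{-1} (\log T)^{O(1)}$. I would then partition the tuples according to (i) which $t_i$ coincide, so as to pass to a form with distinct $z$-indices as required by Lemma~\ref{GeneralRZ}, and (ii) the dyadic size $K$ of the linear form. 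The count of tuples with $\bigl|\sum_i \epsilon_i \xi_i n_{t_i}\bigr| \le K$ is then furnished by Lemma~\ref{GeneralRZ} (applied with $s \le 2k$), while~\eqref{mudecay} controls the Fourier transform of $\mu$. The target is a bound of the form $M_k \ll_k T^k (\log T)^{A k}$ for some constant $A = A(\tau, r)$.

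A Chebyshev inequality then yields $\mu(\{\bet : S_I(\bet) = 0\}) \ll T^{-2k} M_k$; a union bound over the $\lceil 1/\delta \rceil$ arcs, with $k$ chosen as a sufficiently large constant in terms of $\eps, \tau, r$, makes the total summable in $M$, and Borel--Cantelli along $T = 2^M$ closes the argument. The main obstacle is extracting the sharp exponent $3 + \eps$ rather than some opaque $(\log T)^{O(1)}$. A plain second-moment argument is not enough; the refined exponent hinges on the \emph{explicit} dependence on $s$ in Lemma~\ref{GeneralRZ} --- in particular on the second branch $Y^{s-1} Z (\log Z)^{s-1}$ of the maximum there, which controls the diagonal contribution --- together with a careful balance between $k$, the size of the Fourier parameters $\xi_i$, and the scale $\delta$.
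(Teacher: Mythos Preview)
Your overall architecture matches the paper's: bump-function detectors at $O(\delta^{-1})$ centres, high moments via Fourier expansion, Lemma~\ref{GeneralRZ} to count near-solutions of the lacunary linear form, then Chebyshev, a union bound over the centres, and Borel--Cantelli. The gap is quantitative, and it is fatal as written.

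Your target $M_k \ll_k T^k (\log T)^{Ak}$ is unattainable. Since $\phi_I$ has $L^2$-norm of order $\delta^{-1/2}$, already the variance of $S_I$ is $\asymp T\delta^{-1} = T^2/(\log T)^{3+\eps}$; in general the best one can hope for is $M_k \ll_k T^{2k}/(\log T)^{Ck}$, and this is indeed what Lemma~\ref{GeneralRZ} delivers. Chebyshev then gives only $\mu(\{S_I=0\}) \ll_k (\log T)^{-Ck}$, and after the union bound over $\asymp \delta^{-1}$ arcs you are left with roughly $T/(\log T)^{Ck+3+\eps}$. Along $T=2^M$ this is $2^M M^{-O_k(1)}$, which diverges for \emph{every} fixed $k$. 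Hence ``$k$ chosen as a sufficiently large constant in terms of $\eps,\tau,r$'' cannot close the argument.

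The paper's remedy is to let the moment order grow with $T$: it takes $s = (\log T)/\sqrt{\log\log T}$, so that $(\log T)^{(2+\eps)s}$ beats any fixed power of $T$, while the combinatorial cost $(2s)!R^{2s}$ from Lemma~\ref{GeneralRZ} remains under control (Stirling gives $\log((2s)!)\sim 2s\log s \sim 2\log T\sqrt{\log\log T}$, which is dominated by $(2+\eps)s\log\log T = (2+\eps)\log T\sqrt{\log\log T}$). This is the real reason the explicit $s$-dependence in Lemma~\ref{GeneralRZ} matters: not to sharpen a fixed-moment estimate, but to permit the moment order to grow with $T$. The resulting tail probability is $T^{o(1)-\eps\sqrt{\log\log T}}$, which survives the union bound over $O(T)$ centres and is summable over all $T\in\bN$ directly, so no dyadic reduction is needed (this also sidesteps the issue that the sets $\{T<t\le 2T\}$ are not nested in $T$). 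Two minor points: in the application with $Y\asymp T/(\log T)^{3+\eps}$ and $Z=T$ it is the first branch $(YZ\log Z)^{s/2}$ of the maximum in Lemma~\ref{GeneralRZ} that dominates, not the second; and there is no need to sort tuples by coinciding $t_i$, since Lemma~\ref{GeneralRZ} does not require the $z_j$ to be distinct.
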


As $\eps > 0$ is arbitrary, it suffices to establish \eqref{DispersionBound} with the right hand side replaced by $D/T$, where
\[
D = (\log T)^{3 + 2 \eps}.
\]
The plan is to show that, generically, intervals of length roughly $D/T$ have the expected number of points. To this end, we use
a concentration argument and the union bound.

Let $\ome: \bR \rightarrow \bR_{\geq 0}$ 
be a bump function
supported on $[-1,1]$
such that
$\int_{-\infty}^{\infty}
\ome(x) \,\rd x = 1$. 
This necessarily has the Fourier decay property
\[
\widehat{\omega}(\xi) \ll_N 
(1 + |\xi|)^{-N} 
\]
for each $N > 0$.
Let $c \in (0,1)$, and define
\[
C_T(\bet) = C_{T,c}(\bet) =
\sum_{u \in \bZ} \:
\sum_{T < t \le 2T}
\ome \left(
\frac{n_t \bet - c + u}
{D/T} \right),
\qquad
C(\bet) = C_T(\bet) - D.
\]
Poisson summation yields
\[
C_T(\bet) = \frac{D}{T}
\sum_{\ell \in \bZ} \:
\sum_{T < t \le 2T}
\widehat \ome(D\ell/T)
e(\ell (n_t \bet - c)).
\]
Thus, as $\widehat \ome (0) = 1$, we have
\[
C(\bet) = 
\frac{D}{T}
\sum_{\ell \ne 0} \:
\sum_{T < t \le 2T}
\widehat \ome(D\ell/T)
e(\ell (n_t \bet - c)).
\]
Define 
\[
L = \frac{T}{(\log T)^{3+\eps}}
\]
and
\[
C_0(\bet) =
\frac{D}{T}
\sum_{0 < \vert \ell \vert \leq L}
\widehat{\omega}
\left(D\ell/T\right)
\sum_{T < t \le 2T}
e(\ell(n_{t} \bet - c)).
\]
Observe from the rapid decay of $\widehat \ome$ that 
\[
C_0(\bet) = C(\bet) + 
O((\log T)^{-100}).
\]

\begin{remark}
Here and in what follows, the implied constants are allowed to depend on $\omega$ without specific indication. However, the implied constants will always be independent of the shift parameter $c$.
\end{remark}

We may assume that $T \ge 100$.
Let $s \in \bN$ with 
$8s \le T$. 
We begin with a Fourier series expansion
\[
|C_0(\bet)|^{2s}
= \sum_{k \in \bZ} a_k e(k \bet),
\]
noting that the
series on the right is a (finite) sum. 
Our goal is to bound
\[
\| C_0 \|^{2s}_{L^{2s}(\mu)}
= \int_0^1 |C_0(\bet)|^{2s} \d \mu(\bet)
= \sum_{k \in \bZ} 
a_k \int_0^1 e(k \bet) \d \mu(\bet)
= \sum_{k \in \bZ} a_k \widehat \mu(-k).
\]
This will enable us to show that $C_0(\bet)$ is generically small, and we will see from there that $C_T(\bet)$ concentrates around $D$.

We compute the Fourier coefficients $a_k$ using the Fourier coefficients of
\[
C_0(\bet) = \sum_{m \in \bZ} 
b_m e(m \bet).
\]
For $m \in \bZ$, we have
\[
b_m = \frac{D}{T}
\sum_{(\ell,t)} 
\widehat{\omega}
\left(D\ell/T\right)
e(-\ell c),
\]
where the summation runs over all integer pairs $(\ell, t)$ such that 
\[
0 < \vert \ell \vert \le L,
\qquad
T < t \le 2T,
\qquad
\ell n_t = m.
\]
Note that $\vert \widehat{\omega}(\xi)\vert
\leq \int_{-\infty}^{\infty} 
\omega(x)  \,\rd x =1$, for any $\xi \in \bR$.
Therefore
\[
|a_k| \le (D/T)^{2s} N(k)
\qquad (k \in \bZ),
\]
where $N(k)$ counts $(\ell_1, \ldots, \ell_{2s}, t_1, \ldots, t_{2s}) \in \bZ^{4s}$ such that
\begin{equation}
\label{LTranges}
0 < |\ell_j| \le L,
\quad
T < t_j \le 2T
\qquad (1 \le j \le 2s)
\end{equation}
and
\[
\sum_{j \le s} 
(\ell_j n_{t_j} - \ell_{s+j} n_{t_{s+j}})
= k.
\]

Let $R = R(r) > 1$ be as in Lemma \ref{GeneralRZ}.

\begin{lem} \label{SmallIndex}
Let $K \in [0, n_T/8]$. Then
\[
\sum_{|k| \le K} |a_k| \le
\frac{(2s)! R^{2s} D^{2s}}{(\log T)^{2s+s\eps}}.
\]
\end{lem}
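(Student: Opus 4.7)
The plan is to reduce the sum to a direct application of Lemma \ref{GeneralRZ}. Starting from the pointwise bound $|a_k| \le (D/T)^{2s} N(k)$ recorded just before the statement, I would sum over $|k| \le K$ to obtain $\sum_{|k|\le K}|a_k| \le (D/T)^{2s} M$, where $M$ is the total number of $4s$-tuples $(\ell_1,\ldots,\ell_{2s},t_1,\ldots,t_{2s})$ that obey the ranges \eqref{LTranges} together with
\[
\Bigl|\sum_{j \le s}(\ell_j n_{t_j} - \ell_{s+j} n_{t_{s+j}})\Bigr| \le K.
\]
Since the admissible range $0 < |\ell_j| \le L$ is symmetric under negation, I would absorb the two minus signs via the substitution $\ell_{s+j} \mapsto -\ell_{s+j}$, recasting the constraint as the single linear inequality $|\sum_{j=1}^{2s} \ell_j n_{t_j}| \le K$ with the same ranges on $\ell_j$ and $t_j$.

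This is exactly the input to Lemma \ref{GeneralRZ}, applied with $s$ replaced by $2s$, $Y = L$, $Z = T$, and $a(\cdot) = n_{\cdot}$. The hypothesis $4(2s) = 8s \le T$ holds by the standing assumption, $K \le n_T/8$ is part of the statement, and $2 \le Y \le Z$ is routine for $T$ sufficiently large (otherwise the lemma is vacuous). The lemma then furnishes
\[
M \le (2s)!\, R^{2s} \max\bigl\{(LT \log T)^s,\; L^{2s-1} T (\log T)^{2s-1}\bigr\}.
\]

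The remaining step is algebraic bookkeeping. Substituting $L = T/(\log T)^{3+\eps}$, the first term in the maximum evaluates to $T^{2s}(\log T)^{-s(2+\eps)}$ and the second to $T^{2s}(\log T)^{-(2s-1)(2+\eps)}$. Since $2s - 1 \ge s$ for every $s \ge 1$, the first branch dominates (with equality when $s=1$). Multiplying by $(D/T)^{2s}$ cancels the $T^{2s}$ factor and leaves exactly the claimed bound $(2s)!\,R^{2s} D^{2s}/(\log T)^{2s + s\eps}$.

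There is no real obstacle here; the one point that needs attention is identifying which branch of the maximum in Lemma \ref{GeneralRZ} is dominant. The specific choice $L = T/(\log T)^{3+\eps}$ was engineered precisely so that the diagonal-type contribution $(LT\log T)^s$ governs the count, and so that the resulting $\log$-power lines up with the target exponent $s(2+\eps) = 2s + s\eps$ in the statement.
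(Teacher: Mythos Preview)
Your proof is correct and follows the same approach as the paper: bound $\sum_{|k|\le K}|a_k|$ by $(D/T)^{2s}$ times a count of $4s$-tuples satisfying the linear inequality, then apply Lemma~\ref{GeneralRZ} with $2s$ variables, $Y=L$, $Z=T$, and finish by substituting $L=T/(\log T)^{3+\eps}$. Your write-up is in fact slightly more explicit than the paper's, which simply records the outcome $\fN \le (2s)!R^{2s} T^{2s}/(\log T)^{2s+s\eps}$ without displaying the two branches of the maximum or the sign substitution; one minor quibble is that your parenthetical ``otherwise the lemma is vacuous'' is not quite right (the statement is not vacuous for small $T$), but the paper likewise does not address the $L\ge 2$ hypothesis and the issue is harmless since one can always enlarge $Y$ to $2$ when $L<2$.
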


\begin{proof} Observe that
\[
\sum_{|k| \le K} |a_k| \le 
(D/T)^{2s} \fN,
\]
where $\fN$ counts 
$(\ell_1,\ldots, \ell_{2s}, 
t_1, \ldots, t_{2s}) \in \bZ^{4s}$ 
in the ranges \eqref{LTranges} such that
\[
\Biggl| \sum_{j \le s}
(\ell_j n_{t_j} - \ell_{s+j} n_{t_{s+j}})
\Biggr| \le K.
\]
By Lemma \ref{GeneralRZ}, we thus have
\[
\sum_{|k| \le K} |a_k|
\le (D/T)^{2s} (2s)!
R^{2s}
\frac{T^{2s}}{(\log T)^{2s+s\eps}}
= \frac{(2s)! R^{2s} D^{2s}}{(\log T)^{2s+s\eps}}.
\]
\end{proof}

Observe that
\begin{equation} \label{total}
\sum_{k \in \bZ} |a_k| \le
(D/T)^{2s}
\sum_{k \in \bZ} N(k)
\le (D/T)^{2s} (2LT)^{2s}
= (2DL)^{2s}.
\end{equation}

\begin{proof}
[Proof of Theorem \ref{thm: dispersion}]
We use Lemma \ref{SmallIndex} for the small indices, and the estimates \eqref{mudecay}, \eqref{total} for the large ones. This gives
\begin{align*}
\| C_0 \|^{2s}_{L^{2s}(\mu)}
&= \sum_{k \in \bZ} 
a_k \widehat \mu(-k) \ll
\sum_{|k| \le K} |a_k| 
+ K^{-\tau} 
\sum_{|k| > K} |a_k|
\\ & \le
\frac{(2s)! R^{2s} D^{2s}} {(\log T)^{2s+s\eps}}
+ \frac{(2DL)^{2s}}{ K^{\tau}}
\end{align*}
for any $K\in [0,n_T/8]$. If $T$ is sufficiently large in terms of $r, \tau$, then we can choose
\[
s = \frac{\log T}
{\sqrt{\log \log T}},
\qquad K = T^{2s/\tau},
\]
whence
\[
\| C_0 \|^{2s}_{L^{2s}(\mu)}
\ll \frac{(2s)! R^{2s} D^{2s}}{(\log T)^{2s+s\eps}}.
\]
Consequently
\begin{equation}\label{eq: BC mu}
\mu(\{ \bet\in [0,1]:
|C_T(\bet) - D| > 
D/2 \}) 
\ll \frac{(2s)!(2R)^{2s}}
{(\log T)^{2s+s\eps}}.
\end{equation}

Let us now choose a set $\cC = \cC(T) \subset [0,1)$ of cardinality $O(T/D)$ such that the balls of radius $D/T$ centred in $\cC$ cover $[0,1)$. 
As
\begin{align*}
\frac{(2s)! (2R)^{2s}}
{(\log T)^{2s+s \eps}} 
&= \exp \left(
2s \log s + O(s)
- (2+\eps) s \log \log T
\right) \\
&= \exp(2\log T 
\sqrt{\log \log T} 
+ o(\log T)
- (2+\eps) \log T \sqrt{\log \log T}) 
\\ &= \exp(o(\log T)
- \eps\log T \sqrt{\log \log T})
= T^{o(1) - \eps 
\sqrt{\log \log T}},
\end{align*}
we see that $T$ times the right hand side of \eqref{eq: BC mu} is summable over $T \in \bN$. 
Thus, by the first Borel--Cantelli lemma,
for $\mu$-almost all $\bet \in [0,1]$, if $T \in \bN$ 
is sufficiently large in terms of $\bet, r, \eps, \tau$ then
\[
C_{T,c}(\bet) > 0 \qquad (c \in \cC).
\]
In particular, for each $c \in \cC$ there exists $t = t(c) \in (T, 2T]$ 
such that
\[
\| n_t \bet - c\| 
\ll D/T.
\]
Therefore
\[
\mathrm{disp}(\{ 
n_t \bet
\,\mathrm{mod}\,1:
T < t \le 2T\})
\ll D/T,
\]
as claimed.
\end{proof}

\section{Completing the proofs}

In this section, we combine Theorem \ref{thm: dispersion} with existing tools to establish Theorems \ref{cor2}, \ref{MainThm}, \ref{LacunaryApproximation} and \ref{LacunaryBad}.
Let $f: \bN \to [0,1]$ be non-increasing with the doubling property $f(t) \ll f(2t)$, and assume that Theorem \ref{thm: dispersion}
holds with \eqref{DispersionBound} replaced by
\begin{equation}
\label{GeneralDispersionBound}
\mathrm{disp}(\{ 
n_t \bet
\,\mathrm{mod}\,1:
T < t \le 2T\})
= o(f(T)) 
\qquad (T \to \infty).
\end{equation}
By Theorem 
\ref{thm: dispersion}, applied with $\eps/2$ in place of $\eps$, we can take 
\[
f(t) = \frac{(\log t)^{3 + \eps}}{t}.
\]
We now show that Theorem 
\ref{LacunaryApproximation} holds, more generally, with $o(f(t))$ in place of 
$(\log t)^{3+\eps}/t$.

\begin{proof}
[Proof of Theorem 
\ref{LacunaryApproximation}]
For $\mu$-almost all $\bet \in [0,1]$, we have
\eqref{GeneralDispersionBound}.
Thus, if $T \in \bN$ then
there exists 
$t \in (T,2T]$ such that 
\[
\| n_t \bet - \del \|
= o(f(T)).
\]
In particular, there exists an increasing sequence of positive integers $t$ along which
\[
\| n_t \bet - \del \|
= o(f(t)).
\]
\end{proof}

We require the following variant of a lemma of the first author and Zafeiropoulos, which draws its power from a quantitative version 
of the Three Distance Theorem.
\begin{lem}[Chow--Zafeiropoulos, 2021]
\label{lem: 3DT Consequence}
Let $\alpha\in\mathcal{K}$ and $\gamma\in\mathbb{R}$.
Then there exists a sequence $n_1,n_2,\ldots$ of positive integers such that 
\[
8^{t}\leq n_{t}\leq 4e^{6\Lambda(\alpha)t},
\quad n_{t} \Vert n_{t}\alpha-\gamma\Vert\leq 8,
\quad
n_{t+1} > 2n_t
\qquad (t \in \bN).
\]
\end{lem}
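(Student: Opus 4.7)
My plan is to apply the Steinhaus three distance theorem directly at the scale $N = 8^t$; this is enough to bypass any Ostrowski-type calculation and yields a sequence with room to spare in all four required inequalities.

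The key input I would extract is the following quantitative corollary: for any irrational $\alpha$, any integers $M \ge 0$ and $N \ge 1$, and any $\gamma \in \bR$, there exists $n \in [M+1, M+N] \cap \bZ$ with $\|n \alpha - \gamma\| \le 1/N$. Indeed, the set $\{\{j\alpha\}: M+1 \le j \le M+N\}$ is merely a rotation of $\{\{j\alpha\}: 1 \le j \le N\}$; for the latter, the three distance theorem combined with the continuant inequality $\|q_j \alpha\| < 1/q_{j+1}$ bounds the maximal arc-length by $\|q_{k-1}\alpha\| + \|q_k \alpha\| < 2/q_k \le 2/(N+1)$, where $k$ is chosen so that $q_{k-1} \le N < q_k$. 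Any $\gamma \in [0,1)$ then lies within half of this arc-length of some member of the translated set.

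Given this input, for each $t \ge 1$ I would take $M = 8^t - 1$ and $N = 8^t$, producing an integer $n_t$ with $8^t \le n_t \le 2 \cdot 8^t - 1$ and $\|n_t \alpha - \gamma\| \le 1/8^t$. This immediately yields $n_t \|n_t \alpha - \gamma\| \le 2 \le 8$, and the lacunarity condition $n_{t+1} \ge 8^{t+1} = 8 \cdot 8^t > 4 n_t > 2 n_t$ is automatic, independently of the hypothesis $\alpha \in \mathcal{K}$.

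It remains to check the upper bound $n_t \le 4 e^{6 \Lambda(\alpha) t}$. Since every continuant satisfies $q_k \ge F_{k+1}$, one has $\Lambda(\alpha) \ge \log \phi$ with $\phi = (1+\sqrt{5})/2$, and hence $e^{6\Lambda(\alpha)} \ge \phi^6 > 8$; this gives $2 \cdot 8^t \le 4 e^{6\Lambda(\alpha) t}$ for every $t \ge 1$, so the stated upper bound is comfortably met (in fact the looseness of this bound suggests that the authors may have had in mind a more intricate Ostrowski-style construction, but it is unnecessary for the present lemma). The only genuinely delicate item in the plan is pinning down the constant in the three distance bound; if one wishes to derive it from scratch via the continuant identities rather than quoting the theorem, this becomes the bulk of the work, but there is no real conceptual obstacle.
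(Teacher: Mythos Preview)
Your key claim --- that for any irrational $\alpha$, any $M \ge 0$, $N \ge 1$ and any $\gamma$, some $n \in [M+1, M+N]$ satisfies $\|n\alpha - \gamma\| \le 1/N$ --- is false, and the three distance theorem does not yield it. Take $\alpha$ irrational with $\alpha \in (0, 1/1000)$, $M = 0$, $N = 500$, $\gamma = 1/2$. Then $j\alpha \in (0, 1/2)$ for every $1 \le j \le 500$, so $\|j\alpha - \gamma\| > 1/2 - 1/2 \cdot 1/1000 > 1/4$ for all such $j$, far exceeding $1/N = 1/500$. The bound you wrote for the maximal arc, namely $\|q_{k-1}\alpha\| + \|q_k\alpha\| < 2/q_k$ with $q_{k-1} \le N < q_k$, is simply not what the three distance theorem gives: in this example $k = 1$, $q_0 = 1$, $q_1 > 1000$, and your purported bound is below $2/1000$, whereas the true maximal gap exceeds $1/2$. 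The correct quantitative statement is that, with $q_m \le N < q_{m+1}$, the maximal gap is at most $\|q_{m-1}\alpha\| < 1/q_m$; but $1/q_m$ may be far larger than $1/N$ when $a_{m+1}$ is large, and membership of $\alpha$ in $\mathcal{K}$ does not prevent individual partial quotients from being arbitrarily large.

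The argument in Chow--Zafeiropoulos, which the paper cites, therefore works at the \emph{continuant} scale rather than at the geometric scale $8^t$. For each $t$ one chooses $k = k(t)$ and applies the three distance theorem to roughly $q_k$ consecutive multiples of $\alpha$ starting near $q_k$; this yields $n_t \in [q_k, 4q_k)$ with $\|n_t\alpha - \gamma\| \ll 1/q_k$ and hence $n_t\|n_t\alpha - \gamma\| \le 8$. The hypothesis $\alpha \in \mathcal{K}$ then enters essentially: the two-sided control $\phi^{k-1} \le q_k \le e^{\Lambda(\alpha)k}$ allows one to select $k \asymp t$ so that $8^t \le q_k \le e^{O(\Lambda(\alpha))t}$, which is exactly where the bounds $8^t \le n_t \le 4e^{6\Lambda(\alpha)t}$ and the lacunarity $n_{t+1} > 2n_t$ come from. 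Your attempt to bypass $\mathcal{K}$ in the product inequality cannot succeed; that hypothesis is doing real work.
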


\begin{proof}
See \cite[Lemma 2.1]{CZ2021}, and its proof (to see that $n_{t+1} > 2n_t$).
\end{proof}

We now show that Theorem \ref{MainThm} holds, more generally, with $f(\log n)$ in place of $(\log \log n)^{3+\eps} / \log n$ in \eqref{rate}.

\begin{proof}
[Proof of Theorem \ref{MainThm}]
Let $\alp \in \cK$ and $\gam \in \bR$, and let $n_1,n_2,\ldots$ be as in Lemma~\ref{lem: 3DT Consequence}. By Theorem \ref{LacunaryApproximation},
for $\mu$-almost all $\bet \in \bR$, if $\del \in \bR$ then
\[
\| n_t \bet - \del \| 
= o(f(t))
\]
along some infinite sequence of $t \in \bN$. As $\log n_t \ll_\alp t$, and $f$ is non-increasing with the doubling property, we have
$
f(t) \ll_\alp f(\log n_t),
$
whence
\[
n_t\cdot 
\| n_t \alp - \gam \|
\cdot 
\| n_t \bet - \del \|
= o(f(\log n_t))
\]
along this sequence.
\end{proof}

We now show that Theorem \ref{cor2} holds, more generally, 
with $f(\log n)$ in place of 
$(\log \log n)^{3+\eps} 
/ \log n$ in \eqref{rate}.

\begin{proof}
[Proof of Theorem
\ref{cor2}]
Let $s < 1$. Kaufman \cite{Kau1980} constructed
\[
\mu \in \cM(\Bad \cap [0,1])
\]
with the following two key properties:
\begin{enumerate}[(i)]
\item (Frostman dimension)
For any interval $I \subseteq [0,1]$, we have
\[
\mu(I) \ll_s \lam(I)^s,
\]
where $\lam$ denotes Lebesgue measure.
\item (Polynomial Fourier decay)
We have 
\[
\widehat \mu(\xi) \ll (1+|\xi|)^{-7/10^4}
\qquad (\xi \in \bR).
\]
\end{enumerate}
Applying Theorem \ref{MainThm} to this measure gives $\mu(\cE) = 0$, where $\cE$ is the set of $\bet \in [0,1]$ for which there exists $\del \in \bR$ such that 
\[
n \| n \alp - \gam \| \cdot
\| n \bet - \del \| 
< f(\log n)
\]
has only finitely many solutions $n \in \bN$. Choosing $\cG = \Bad \cap [0,1] \setminus \cE$, we have $\mu(\cG) = 1 > 0$. Now the mass distribution principle \cite[Chapter 4]{Fal2014} reveals that $\dimh(\cG) \ge s$. As $s$ can be taken arbitrarily close to $1$, we must have $\dimh(\cG) = 1$.
\end{proof}

\noindent
Theorem \ref{LacunaryBad} follows in the same way, applying Theorem \ref{LacunaryApproximation} instead of Theorem \ref{MainThm}.

\subsection*{Closing remarks}
We see from the proof of Theorem \ref{LacunaryApproximation} that if $T$ is sufficiently large then
\[
\# \left\{ t \in \bN: 
t \le T, \: \: \| n_t \bet - \del \|
< \frac{(\log t)^{3+\eps}}{t}
\right \}
\gg \log T,
\]
which is stronger than the conclusion that \eqref{LacunaryRate} has infinitely many solutions $t \in \bN$. This also leads to the stronger conclusion that if $N$ is sufficiently large then
\[
\# \left\{ n \in \bN:
n \le N, \: \:
n \| n \alp - \gam \| \cdot
\| n \bet - \del \| <
\frac{(\log \log n)^{3+\eps}}
{\log n} 
\right \}
\gg \log \log N
\]
in Theorem \ref{cor2}. Similar counting refinements of our other results may also be inferred.

In light of the aforementioned random heuristic, it is conceivable that one might obtain 
$(\log T)^{1+\eps}/T$ on the right hand side of \eqref{DispersionBound}.
This would be interesting in its own right, and would also strengthen our other results. Getting the optimal order of the dispersion will likely involve lengthy arguments, as this extremal order statistic is non-trivial to compute even with perfectly independent random variables at hand 
---
a luxury we are not granted in the diophantine setting!

\subsection*{Acknowledgements}

NT was supported by the Austrian Science Fund: project J-$4464$ N.
The authors thank Christoph Aistleitner, Victor Beresnevich and Agamemnon Zafeiropoulos for helpful discussions.


\begin{thebibliography}{99}

\bibitem{BW2014}
F. Barroero and M. Widmer, \emph{Counting lattice points and o-minimal structures}, Int.
Math. Res. Not. \textbf{2014,} 4932--4957.

\bibitem{BHV2020}
V. Beresnevich, A. Haynes and S. Velani, \emph{Sums of reciprocals of fractional parts and
multiplicative Diophantine approximation}, Mem. Amer. Math. Soc. \textbf{263} (2020).

\bibitem{Cho2018}
S. Chow, \emph{Bohr sets and multiplicative diophantine approximation}, Duke Math. J. \textbf{167}
(2018), 1623--1642.

\bibitem{CT1}
S. Chow and N. Technau, \emph{Higher-rank Bohr sets and multiplicative diophantine approximation}, Compos. Math. \textbf{155} (2019), 2214--2233.

\bibitem{CT}
S. Chow and N. Technau, \emph{Littlewood and Duffin--Schaeffer-type problems in diophantine approximation}, Mem. Amer. Math. Soc., to appear.

\bibitem{CZ2021}
S. Chow and A. Zafeiropoulos: \emph{Fully Inhomogeneous
Multiplicative Diophantine Approximation on Badly Approximable Numbers},
Mathematika \textbf{67} (2021), 639--646.

\bibitem{DN03}
H. A. David and H. N. Nagaraja,
\emph{Order statistics}, Third Edition,
John Wiley \& Sons, Hoboken, NJ, 2003.

\bibitem{DT1997}
M. Drmota and R. F. Tichy, \emph{Sequences, discrepancies and applications},
Lecture Notes in Math. \textbf{1651,}
Springer--Verlag, Berlin, 1997.

\bibitem{Fal1986}
K. Falconer, \emph{The geometry of fractal sets}, Cambridge Tracts Math. \textbf{85,} Cambridge Univ. Press, Cambridge, 1986.

\bibitem{Fal2014}
K. Falconer, \emph{Fractal geometry: Mathematical foundations and applications}, Third Edition,
John Wiley \& Sons Ltd., Chichester, 2014. 

\bibitem{Gal1962}
P. X. Gallagher, \emph{Metric simultaneous diophantine approximation}, J. Lond. Math. Soc. \textbf{37} (1962), 387--390.

\bibitem{GV2016}
A. Gorodnik and P. Vishe, \emph{Diophantine approximation for products of linear
maps---logarithmic improvements}, Trans. Amer. Math. Soc. \textbf{370} (2016), 487--507.

\bibitem{HJK2014}
A. Haynes, J. L. Jensen and S. Kristensen, \emph{Metrical musings on Littlewood and friends}, A. Haynes, J. L. Jensen and S. Kristensen, Proc. Amer. Math. Soc. \textbf{142} (2014), 457--466.

\bibitem{Kau1980}
R. Kaufman, \emph{Continued fractions and Fourier transforms}, Mathematika \textbf{27} (1980), 262--267.

\bibitem{Mar2000} 
G. Margulis, \emph{Problems and conjectures in rigidity theory}, Mathematics: frontiers and perspectives, 2000, 161--174.

\bibitem{Mat2015}
P. Mattila, \emph{Fourier analysis and Hausdorff dimension},
Cambridge Stud. Adv. Math. \textbf{150,}
Cambridge University Press, Cambridge, 2015.

\bibitem{PV2000}
A. D. Pollington and S. L. Velani, 
\emph{On a problem in simultaneous diophantine approximation: Littlewood's Conjecture}, Acta Math. \textbf{185} (2000), 287--306.

\bibitem{PVZZ2022}
A. D. Pollington, S. Velani, A. Zafeiropoulos and E. Zorin, \emph{Inhomogeneous Diophantine approximation on $M_0$-sets with restricted denominators}, Int. Math. Res. Not. \textbf{2022,} 8571--8643.

\bibitem{RZ2002}
Z. Rudnick, and A. Zaharescu,
\emph{The distribution of spacings between 
fractional parts of lacunary sequences},
Forum Math. \textbf{14} (2002), 691--712.

\bibitem{Sha2011}
U. Shapira, \emph{A solution to a problem of Cassels and
Diophantine properties of cubic numbers}, Ann. of Math. \textbf{173} (2011), 543--557.

\bibitem{TZ2020}
N. Technau and A. Zafeiropoulos, \emph{The Discrepancy of $(n_k x)_{k=1}^\infty$ With Respect to Certain Probability Measures}, Quart. J. Math. \textbf{71} (2020), 573--597.

\bibitem{Tse2009}
J. Tseng, \emph{Badly approximable affine forms and Schmidt games}, J. Number Theory \textbf{129} (2009), 3020--3025.

\end{thebibliography}
\end{document}